\newtheorem{theorem}{Theorem}[section]
\newtheorem{proposition}[theorem]{Proposition}
\newtheorem{corollary}[theorem]{Corollary}
\newtheorem{lemma}[theorem]{Lemma}
\theoremstyle{remark}
\theoremstyle{definition}
\newtheorem{definition}[theorem]{Definition}
\newcommand{\parder}[3][Default]{
	\frac{\partial \ifthenelse{\equal{#1}{Default}}{}{^{#1}}#2}{
              \partial #3 \ifthenelse{\equal{#1}{Default}}{}{^{#1}}}}
\newcommand{\F}{{\mathbb F}}
\newcommand{\tp}{^{\rm t}}
\newcommand{\Mat}{\operatorname{Mat}}
\newcommand{\GL}{\operatorname{GL}}
\newcommand{\tr}{\operatorname{tr}}
\newcommand{\rk}{\operatorname{rk}}
\newcommand{\chr}{\operatorname{char}}
\newcommand{\Co}{{\mathcal C}}
\newcommand{\Ea}{{\mathcal E}}
\newcommand{\Ma}{{\mathcal M}}
\newcommand{\Na}{{\mathcal N}}
\newcommand{\Ve}{{\mathcal V}}
\newcommand{\rd}{{\mathfrak r}}
\newcommand{\A}{{\mathcal A}}
\newcommand{\rct}{\mbox{\begin{tikzpicture}[x=1pt,y=1pt]
  \useasboundingbox (0,0) -- (10,9);
  \draw[color=lightgray] (9,0) -- (0,9);
  \draw (0,0) rectangle (9,9);
  \draw[fill=lightgray] (4,5) rectangle (9,9);
\end{tikzpicture}}_r}
\newcommand{\rctt}{\mbox{\begin{tikzpicture}[x=1pt,y=1pt]
  \useasboundingbox (0,0) -- (10,9);
  \draw[color=lightgray] (9,0) -- (0,9);
  \draw (0,0) rectangle (9,9);
  \draw[fill=lightgray] (0,0) rectangle (5,4);
\end{tikzpicture}}_{n-r}}
\newcommand{\nolisttopbreak}{\vspace{\topsep}\nobreak\@afterheading}
\newenvironment{listproof}[1][\proofname]{\begin{proof}[#1]\mbox{}\nolisttopbreak}{\end{proof}}
\title{Mathieu subspaces of codimension less than $n$ of $\Mat_n(K)$}
\author{Michiel de Bondt}
\begin{document}

\maketitle

\begin{abstract}
\noindent
We classify all Mathieu subspaces of $\Mat_n(K)$ of codimension less than $n$, 
under the assumption that $\chr K = 0$ or $\chr K \ge n$. 

More precisely, we show that any proper Mathieu subspace of $\Mat_n(K)$ of codimension less 
than $n$ is a subspace of $\{M \in \Mat_n(K) \mid \tr M = 0\}$ if 
$\chr K = 0$ or $\chr K \ge n$. On the other hand, we show that every subspace of 
$\{M \in \Mat_n(K) \mid \tr M = 0\}$ of codimension less than $n$ in $\Mat_n(K)$
is a Mathieu subspace of $\Mat_n(K)$ if $\chr K = 0$ or $\chr K \ge n+1$. 
\end{abstract}

\bigskip\noindent
\emph{Key words:} Mathieu subspaces, matrix algebras, radicals.

\bigskip\noindent
\emph{MSC 2010:} 16S50, 16D70, 16D99.

\section{Introduction}

The notion of Mathieu subspaces has been introduced by W. Zhao in \cite{MR2586998}.
The usefulness of this notion has been proven by the many notorious open problems
that has been formulated in terms of it. For more information about Mathieu subspaces 
in general, see \cite{MR2586998}, \cite{MR2859886}, \cite{MR2736912} and the references 
therein. See also \cite{1006.5801} for the connection between Mathieu subspaces and the 
Image Conjecture.

In this paper, we study Mathieu subspaces over a field $K$ of $\Mat_n(K)$: the $n$-dimensional 
matrix ring over $K$. But let us first give the general definition of Mathieu subspaces.

\begin{definition}[following {\cite[Def.\@ 1.1]{MR2859886}} and {\cite[Def.\@ 1.2]{MR2859886}}] 
Let $M$ be an $R$-subspace ($R$-sub\-mod\-ule) of an associative $R$-algebra $\A$.
Then we call $M$ a {\em $\vartheta$-Mathieu subspace} of $\A$ if the following 
property holds for all $a, b, c \in \A$ such that $a^m \in M$ for all $m \ge 1$:
\begin{enumerate}[{\upshape (i)}]
\item $ba^m \in M$ when $m \gg 0$, if $\vartheta =$ ``{\it left}'';
\item $a^mc \in M$ when $m \gg 0$, if $\vartheta =$ ``{\it right}'';
\item $ba^m, a^mc \in M$ when $m \gg 0$, if $\vartheta =$ ``{\it pre-two-sided}'';
\item $ba^mc \in M$ when $m \gg 0$, if $\vartheta =$ ``{\it two-sided}''.
\end{enumerate}
\end{definition}

\noindent
As you can see, there are four different types of Mathieu subspaces. However, for 
Mathieu subspaces over a field $K$ of $\Mat_n(K)$, it is a nice exercise to show that 
the last two types coincide, e.g.\@ by using theorem \ref{rad} in the last section.
In this last section, we look at radicals of Mathieu subspaces. Those radicals play an important 
role in the study of Mathieu subspaces, see the references in the first paragraph of the current 
section.

But the radical can be taken for any subset $S$ of the whole space $\A$, 
not only for Mathieu subspaces. The {\em radical} $\rd(S)$ of $S$ is just the set 
$\{a \in \A \mid a^m \in S$ for all $m \gg 0\}$.

In \cite[Th.\@ 5.1]{MR2859886}, Zhao classifies all Mathieu subspaces of codimension one
of $\Mat_n(K)$. Zhao proved that the subspace $H$ of $\Mat_n(K)$ consisting of all matrices 
with trace zero is the only candidate, and that $H$ is indeed a $\vartheta$-Mathieu subspace
of $\Mat_n(K)$, if and only if $\chr K = 0$ or $\chr K \ge n+1$.

A. Konijnenberg proved in his Master's thesis \cite{konijnenbergmt} that for Mathieu subspaces of 
codimension two of $\Mat_n(K)$, $K$-subspaces of $H$ are the only possible candidates, under the 
assumption that $n \ge 3$, see \cite[Th.\@ 3.4]{konijnenbergmt}. 
By taking a one-sided ideal, one can see that this assumption is necessary for the one-sided cases. 
But the following result shows that the assumption $n \ge 3$ cannot be omitted either in the two-sided
case, see also \cite[Th.\@ 3.10]{konijnenbergmt}. 

\begin{proposition}
Let $K$ be a field such that $p := \chr K = 0$ or $p = \chr K \ge n$. Suppose that $K \ncong \F_p$
in the case where $p \in \{n,n+1\}$. Take $a = 1$ if $p \notin \{n,n+1\}$ and take 
$a \in K \setminus \F_p$ if $p \in \{n,n+1\}$. Then the subspace 
$$
\Ma := \big\{ M \in \Mat_n(K) ~\big|~ M_{n1} = M_{n2} = \cdots = M_{n(n-1)} = 0 
                                        = \tr M + a M_{nn}\big\}
$$
of $\Mat_n(K)$ is a two-sided Mathieu subspace of $\Mat_n(K)$.
\end{proposition}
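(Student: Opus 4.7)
The plan is to show that the hypothesis ``$M^m \in \Ma$ for every $m \geq 1$'' in fact forces $M$ to be nilpotent, after which the two-sided Mathieu property follows trivially: $M^n = 0$ yields $B M^m C = 0 \in \Ma$ for all $B, C \in \Mat_n(K)$ and every $m \geq n$.

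Each $M \in \Ma$ has the block form $M = \begin{pmatrix} A & v \\ 0 & \alpha \end{pmatrix}$ with $A \in \Mat_{n-1}(K)$, $v \in K^{n-1}$, $\alpha = M_{nn}$, subject to $\tr(A) + (1+a)\alpha = 0$. Since block-triangularity is preserved by multiplication, $M^m$ has top-left block $A^m$ and bottom-right block $\alpha^m$, so the condition $M^m \in \Ma$ for all $m \ge 1$ is equivalent to the single power-sum identity
\[
\tr(A^m) + (1+a)\alpha^m = 0 \qquad (m \geq 1).
\]
Passing to $\bar{K}$ and letting $\lambda_1, \ldots, \lambda_{n-1}$ denote the eigenvalues of $A$, this translates to the rational-function equation
\[
\sum_{i=1}^{n-1} \frac{\lambda_i}{1 - \lambda_i t} = \frac{-(1+a)\alpha}{1 - \alpha t}
\]
in $\bar{K}(t)$.

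The main obstacle is extracting from this that $\alpha = 0$ and $A$ is nilpotent. Comparing poles and residues on both sides---and using that in each of the two allowed cases any non-zero eigenvalue has multiplicity $\leq n-1 < p$, hence contributes a genuine simple pole on the left---one concludes that either $\alpha = 0$ and every $\lambda_i = 0$, or else $\alpha \neq 0$, all non-zero $\lambda_i$ are equal to $\alpha$, and their multiplicity $k \in \{1,\ldots,n-1\}$ satisfies $k \cdot 1_K = -(1+a)$. The second alternative is ruled out by the hypothesis: when $a = 1$ and $p := \chr K \notin \{1,\ldots,n+1\}$, the relation $k \equiv -2 \pmod{p}$ would force $k \geq p - 2 > n-1$ (and is outright impossible in characteristic zero); when $p \in \{n,n+1\}$ and $a \in K \setminus \F_p$, one has $-(1+a) \notin \F_p$ whereas $k \cdot 1_K \in \F_p$. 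Therefore $\alpha = 0$ and $\tr(A^m) = 0$ for every $m \geq 1$; Newton's identities, applicable because $(n-1)! \neq 0$ in $K$, then yield $e_1(A) = \cdots = e_{n-1}(A) = 0$, so the characteristic polynomial of $A$ is $x^{n-1}$ and $A$ is nilpotent. Hence $M$ itself is nilpotent with $M^n = 0$, as required.
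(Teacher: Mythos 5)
Your proof is correct, but it follows a genuinely different route from the paper. The paper does not analyse powers at all: it invokes Zhao's criteria (\cite[Lem.\@ 4.1]{MR2859886} and \cite[Cor.\@ 4.3]{MR2859886}) to reduce the two-sided Mathieu property to the absence of nonzero idempotents in $\Ma$, and then disposes of a putative idempotent $E$ by noting that $E_{nn}\in\{0,1\}$, that the trace of an idempotent equals its rank in $K$, and hence that $0=\tr E + aE_{nn}\in\{1,\ldots,n\}+\{0,a\}$ is excluded by the hypotheses on $K$ and $a$. You instead prove the stronger statement that every $M$ with $M^m\in\Ma$ for all $m\ge 1$ is nilpotent, via the block-triangular form, the power-sum identities $\tr(A^m)+(1+a)\alpha^m=0$, and a partial-fraction/residue comparison over $\bar K$, using that multiplicities of nonzero eigenvalues are at most $n-1<p$. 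This buys a self-contained argument (no external criterion for Mathieu subspaces is needed) and in fact shows that the radical of $\Ma$ consists of nilpotent matrices, at the cost of more work; the paper's proof is shorter because it delegates the reduction to idempotents to Zhao. One point you should make explicit: your dichotomy (``either $\alpha=0$ and all $\lambda_i=0$, or $\alpha\ne 0$ and $k\cdot 1_K=-(1+a)$'') presupposes $1+a\ne 0$, since otherwise the right-hand side has no pole and the case $\alpha\ne 0$ with $A$ nilpotent escapes both branches. This is immediate from the hypotheses --- $a=1$ comes with $\chr K\ne 2$, and $a\notin\F_p$ forces $a\ne -1$ --- and indeed it is exactly the case $a=-1$ that must be excluded, because then $e_ne_n\tp$ is a non-nilpotent idempotent in $\Ma$; but as written the step is silently assumed rather than checked.
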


\begin{proof}
On account of \cite[Lem.\@ 4.1]{MR2859886} and \cite[Cor.\@ 4.3]{MR2859886}, it suffices to
show that $\Ma$ has no nontrivial idempotent. Hence assume that $E \in \Ma$ is a nonzero idempotent.
Since $E_{n1} = E_{n2} = \cdots = E_{n(n-1)} = 0$, we see that the trailing principal minor matrix 
$E_{nn}$ of size $1$ of $E$ is an idempotent as well. 

By looking at Jordan normal forms, we see that the rank and the trace 
of an idempotent matrix over $K$ are equal within $K$. 
Hence $\tr E = \rk E \in \{1,2,\ldots,n\}$ and 
$E_{nn} \in \{0,1\}$. It follows that 
\begin{equation} \label{Eeq}
0 = \tr E + a E_{nn} \in \{1,2,\ldots, n\} + \{0,a\}
\end{equation}
within $K$. If $p = 0$ or $p \ge n+2$, then $a = 1$ and $n + a < n+2$, so
\eqref{Eeq} cannot be satisfied. Hence $p \in \{n,n+1\}$ and $a \notin \F_p$.
From \eqref{Eeq} and  $a \notin \F_p$, it follows that $E_{nn} = 0$, so 
$\tr E = \rk E < n$. This contradicts \eqref{Eeq}, because $p \ge n$.
\end{proof}

\noindent
If $K$ is closed under taking square root and $\chr K \ne 2$, then the above proposition 
gives all two-sided Mathieu subspaces of $\Mat_2(K)$ of (co)dimension two
which are not contained in $H$ up to linear conjugation, except that 
$a$ may be any element of $K$ such that $\Ma \nsubseteq H$ and the left hand side of 
\eqref{Eeq} is not contained in the right hand side of \eqref{Eeq}, i.e.\@
$$
a \ne 0 \qquad \mbox{and} \qquad 0 \notin \{1,2\} + \{0,a\} \qquad \mbox{in $K$}
$$
respectively, which comes down to that $a \notin \{-2,-1,0\}$ in $K$. 
This has been proved by Konijnenberg in \cite[Th.\@ 3.10]{konijnenbergmt}. 

Example 3.11 in \cite{konijnenbergmt} shows that the codimension $n$ case
seems quite difficult. Hence we shift our focus to subspaces of $\Mat_n(K)$ 
of codimension less than $n$ from now on. But let us first say something 
about subspaces of $H$.

\begin{lemma} \label{prelm}
Assume $\Ma$ is a subspace of $\Mat_n(K)$ such that $\tr M = 0$ for all $M \in \Ma$.
Then for
\begin{enumerate}[{\upshape (1)}]

\item $\chr K = 0$ or $\chr K \ge n+1$,

\item $\chr K = 0$ or $\chr K \ge n$ and $I_n \notin \Ma$,

\item Every element of $\rd(\Ma)$ is nilpotent,

\item $\Ma$ is a two-sided Mathieu subspace,

\end{enumerate}
we have {\upshape (1) $\Rightarrow$ (2) $\Rightarrow$ (3) $\Rightarrow$ (4)}.
\end{lemma}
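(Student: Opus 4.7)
My plan is to dispose of the two ``soft'' implications 1) $\Rightarrow$ 2) and 3) $\Rightarrow$ 4) first, and then concentrate on 2) $\Rightarrow$ 3), which is where the content sits. For 1) $\Rightarrow$ 2), I would simply observe that $\chr K \notin \{1, 2, \ldots, n\}$ forces $n \ne 0$ in $K$, so $\tr I_n = n \ne 0$ and $I_n$ is not in the trace-zero space, hence not in $\Ma$; and the characteristic condition in 2) is weaker than in 1). For 3) $\Rightarrow$ 4), I would note that any $A \in \Mat_n(K)$ satisfying $A^m \in \Ma$ for all $m \ge 1$ lies in $\rd(\Ma)$, so by 3) is nilpotent; an $n \times n$ nilpotent matrix satisfies $A^n = 0$, so $BA^mC = 0 \in \Ma$ for all $B, C \in \Mat_n(K)$ and all $m \ge n$, delivering the two-sided Mathieu property.

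For 2) $\Rightarrow$ 3), I would pick $A \in \rd(\Ma)$ and extract conditions on the eigenvalues from the identity $\tr A^m = 0$, which holds for $m \gg 0$ because $\Ma$ is contained in the trace-zero space. Writing the distinct nonzero eigenvalues of $A$ in $\bar K$ as $\nu_1, \ldots, \nu_j$ with multiplicities $r_1, \ldots, r_j$, the condition becomes $\sum_i r_i \nu_i^m = 0$ in $\bar K$ for $m \gg 0$. Evaluating this at $j$ consecutive large values of $m$ gives a linear system whose coefficient matrix factors as a diagonal matrix with nonzero entries times a Vandermonde in the distinct $\nu_i$, which is invertible. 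Hence $r_i \cdot 1_K = 0$ in $K$, i.e.\@ $\chr K$ divides $r_i$, for every $i$.

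Combining this with $r_i \le n$ and the standing hypothesis $\chr K \notin \{1, 2, \ldots, n-1\}$, only two cases survive: either $A$ has no nonzero eigenvalue and is therefore nilpotent (done), or $\chr K = n$ and $A$ has a single nonzero eigenvalue $\nu$ of multiplicity $n$. The main obstacle is ruling out this second case, and this is precisely why the hypothesis $I_n \notin \Ma$ has been inserted in 2). The plan is to exploit that in characteristic $n$ (necessarily prime) the intermediate binomial coefficients vanish, so the characteristic polynomial $(x - \nu)^n$ collapses to $x^n - \nu^n$; in particular $\nu^n \in K$. Cayley--Hamilton then yields $A^n = \nu^n I_n$, a nonzero scalar matrix, so $A^{nk} = (\nu^n)^k I_n \in \Ma$ for all sufficiently large $k$, which (dividing by the invertible scalar $(\nu^n)^k$) forces $I_n \in \Ma$ and contradicts 2). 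This closes the argument.
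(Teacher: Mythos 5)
Your proof is correct but takes a genuinely different route from the paper's. Both start from the vanishing of the power sums $\tr A^m$ for large $m$ and finish by applying Cayley--Hamilton, but the algebraic tool in the middle differs. The paper applies Newton's identities to the power sums of the eigenvalues of a Jordan form $B$ of $A^N$: since $1, \ldots, n-1$ are invertible in $K$ under hypothesis 2), the elementary symmetric functions $e_1, \ldots, e_{n-1}$ all vanish, so the characteristic polynomial has the shape $t^n + (-1)^n \det B$; Cayley--Hamilton then shows $A^{nN}$ is a scalar multiple of $I_n$, and the dichotomy ``$I_n \in \Ma$ or $A$ nilpotent'' falls out at once. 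Your argument instead isolates the distinct nonzero eigenvalues with their algebraic multiplicities $r_i$, solves a Vandermonde system (at $j$ consecutive large exponents) to get $\chr K \mid r_i$ for each $i$, and exploits $1 \le r_i \le n$ together with $\chr K \notin \{1,\ldots,n-1\}$ to narrow everything down either to the nilpotent case or to the single degenerate case $\chr K = n$ prime, one nonzero eigenvalue of multiplicity $n$; there the freshman's dream collapses $(x-\nu)^n$ to $x^n - \nu^n$, and Cayley--Hamilton again produces a nonzero scalar power of $A$, so $I_n \in \Ma$ after all. Your route has the advantage of making visible exactly where the non-nilpotent case can occur (characteristic equal to $n$, eigenvalue of full multiplicity), hence precisely why the hypothesis $I_n \notin \Ma$ is indispensable, whereas the Newton-identities computation is more compact but somewhat more opaque about that. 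Both arguments are sound, and your spelled-out treatment of $3) \Rightarrow 4)$ (nilpotency forces $A^n = 0$, so $B A^m C = 0 \in \Ma$ for $m \ge n$) is a harmless expansion of the paper's one-line appeal to the definition.
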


\begin{proof}
If $\chr K = 0$ or $\chr K \ge n+1$, then $\tr I_n = n \ne 0$ in $K$. 
This gives (1) $\Rightarrow$ (2). Since (3) $\Rightarrow$ (4) follows directly 
from the definition of Mathieu subspace, (2) $\Rightarrow$ (3) remains to be proved.

So assume that $A \in \rd(\Ma)$. Then there exists an $N$ such that $A^m \in \Ma$ for all
$m \ge N$. Now let $B$ be the Jordan normal form of $A^N$ (or any other triangular matrix that is 
linearly conjugate to $A^N$). Then $\tr B^m = \tr A^{mN} = 0$
for all $m \ge 1$. Let $\beta$ be the diagonal of eigenvalues of $B$. Then $\sum_{i=1}^n \beta_i^m = 0$ 
for all $m$. 

Using the Newton identities on the eigenvalues of $B$, we get that the eigenvalue
polynomial of $B$ is of the form $t^n + (-1)^n \det B$ if $\chr K = 0$ or $\chr K \ge n$
(where $\det B = 0$ in case  $\chr K = 0$ or $\chr K \ge n+1$). From the Cayley-Hamilton theorem,
we deduce that $B^n + (-1)^n (\det B) I_n = 0$. It follows that $B^n$ and hence also $A^{nN}$
is a multiple of $I_n$. So either $I_n \in \Ma$ or $A^{nN} = 0$. This gives (2) 
$\Rightarrow$ (3).
\end{proof}

\noindent
Our main theorem, theorem \ref{main1} below , is that we indeed have $\Ma \subseteq H$ if the 
codimension is less than $n$, provided the base field $K$ is large enough. Sections 3 to 5
will be devoted to the highly technical proof of theorem \ref{main1}. But first, we will
give a rough sketch of this  proof in the next section.

\begin{theorem} \label{main1}
Let $K$ be a field.
Assume $\Ma$ is a proper Mathieu subspace of any type of $\Mat_n(K)$ of codimension less than 
$\min \{n, \#K\}$. Then $\tr M = 0$ for all $M \in \Ma$. In particular, every element of $\rd(\Ma)$ 
is nilpotent and $\Ma$ is a two-sided Mathieu subspace if $\chr K = 0$ or $\chr K \ge n$.
\end{theorem}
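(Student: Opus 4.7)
The clauses about $\rd(\Ma)$ being nilpotent and $\Ma$ being two-sided Mathieu follow from Lemma~\ref{prelm} once $\Ma \subseteq H$ is known, so the essential content is to prove $\tr M = 0$ for every $M \in \Ma$. I would argue this by contradiction, assuming $\Ma$ is a proper $\vartheta$-Mathieu subspace of codimension $c < \min\{n, \#K\}$ with $\Ma \not\subseteq H$, and aiming to derive $\Ma = \Mat_n(K)$.

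A first reduction: all four versions of the Mathieu property may be subsumed by the left one. Two-sided Mathieu implies left Mathieu upon specializing $c = I_n$ in the conclusion; pre-two-sided includes the left condition by definition; and right Mathieu is interchanged with left Mathieu under the anti-automorphism $M \mapsto M\tp$, which preserves $\tr$, codimension, and $H$. So it suffices to argue under the left Mathieu hypothesis. Under that hypothesis, the endgame is clean once one has produced $A \in \Ma$ with $A^m \in \Ma$ for every $m \ge 1$ and $A$ invertible: the left Mathieu property yields $\Mat_n(K) \cdot A^m \subseteq \Ma$ for some large $m$, and since $A^m$ is then invertible this left ideal is all of $\Mat_n(K)$, contradicting properness.

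The real task is to construct such an invertible $A$, and this is where both hypotheses $c < n$ and $c < \#K$ enter together. The natural strategy is parametric. Fix $A_0 \in \Ma$ with $\tr A_0 \ne 0$ (available since $\Ma \not\subseteq H$) and study affine families $\lambda \mapsto A_0 + \lambda B$ with $B \in \Ma$, or more general multi-parameter deformations. Each power $(A_0 + \lambda B)^m$ is a polynomial in $\lambda$ of degree $m$ taking values in the $c$-dimensional quotient $\Mat_n(K) / \Ma$; by Cayley--Hamilton it suffices to control the first $n$ such powers. With $\#K > c$, a Vandermonde-type argument on $\lambda$ should arrange the vanishing of the obstructions, while $c < n$ ensures that enough directions $B$ are available inside $\Ma$ to maneuver. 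Throughout, one propagates the trace constraint $\tr A_0 \ne 0$, using Newton's identities as in the proof of Lemma~\ref{prelm}, to force the resulting $A$ to be invertible (and in particular non-nilpotent).

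The main obstacle is exactly this parametric-existence step. Requiring $A, A^2, \ldots, A^n$ all to lie in $\Ma$ imposes up to $nc$ scalar conditions, whereas a single affine line provides only one parameter; one therefore needs a carefully chosen multi-parameter deformation with enough flexibility, together with sufficient trace control to avoid the degenerate nilpotent case. Engineering this delicate balance seems to be what necessitates the three technical sections following the theorem. Once an invertible $A$ has been produced, the contradiction above closes the proof.
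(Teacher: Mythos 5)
Your endgame --- produce an invertible $A \in \Ma$ with $A^m \in \Ma$ for every $m \ge 1$, and then use the left Mathieu property and invertibility of some $A^m$ to force $\Ma = \Mat_n(K)$ --- is in principle valid, with a little care: the Mathieu property gives, for each $b$, an $N_b$ with $bA^m \in \Ma$ for $m \ge N_b$; the Cayley--Hamilton recurrence for the sequence $(bA^m)_{m \ge 0}$ has invertible constant coefficient $\pm\det A$, so eventual membership in $\Ma$ propagates backwards to $m = 0$ and gives $b \in \Ma$ for all $b$. The fatal gap is the existence step: such an $A$ need not exist, and the parametric-deformation sketch cannot produce it. Take $n = 2$ and $\Ma = \{ M \in \Mat_2(K) : M_{22} = 0 \}$, a codimension-one subspace not contained in $H$. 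Any invertible $A = \left(\begin{smallmatrix} a & b \\ c & 0\end{smallmatrix}\right) \in \Ma$ has $bc \ne 0$, whence $(A^2)_{22} = bc \ne 0$ and $A^2 \notin \Ma$; so no invertible $A$ has even $A, A^2 \in \Ma$. Since your deformation construction does not invoke the Mathieu hypothesis at all, this example shows it cannot succeed in general. Invoking the Mathieu hypothesis inside the construction of $A$ would be circular, since that hypothesis is precisely what must ultimately force $\Ma = \Mat_n(K)$.

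The paper's key move, which you miss, is to target two idempotents rather than one invertible radical element. Theorem \ref{rectang} and Corollary \ref{rectangcor} produce, after a suitable conjugation, idempotents $E, E' \in \Ma$ of complementary block-lower-triangular shapes (with $I_r$ in the top-left corner of $E$ and $I_{n-r}$ in the bottom-right corner of $E'$) such that $E + E'$ is unipotent, hence invertible. Because $E^m = E \in \Ma$ and $(E')^m = E' \in \Ma$ for all $m$, the left Mathieu property gives $bE, bE' \in \Ma$ for every $b$; linearity of $\Ma$ then gives $b(E + E') \in \Ma$, and invertibility of $E + E'$ yields $\Ma = \Mat_n(K)$. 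Crucially, $E + E'$ itself is generally \emph{not} in $\rd(\Ma)$, which is exactly why seeking a single invertible element of $\rd(\Ma)$ is the wrong target. Producing $E$ and $E'$ is a purely linear-algebraic dimension count once the constraint space $\Co$ has been conjugated so that every $C \in \Co \oplus KI_n$ whose top-right $r \times (n-r)$ block vanishes is a scalar multiple of $I_n$; arranging that conjugation is the actual content of Theorems \ref{main2} and \ref{main3}, and is where the hypotheses $c < n$ and $c < \#K$ enter. Your reduction to the left Mathieu case and your appeal to Lemma \ref{prelm} for the secondary clauses are both fine.
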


\noindent
Using theorem \ref{main1}, we can classify all Mathieu subspaces of $\Mat_n(K)$ of codimension less 
than $n$, under the assumption that $\chr K = 0$ or $\chr K \ge n$.

\begin{corollary}
Let $K$ be a field such that $\chr K = 0$ or $\chr K \ge n$.
Then for a proper $K$-subspace $\Ma$ of $\Mat_n(K)$ of codimension less than $n$, 
$\Ma$ is a Mathieu subspace of any arbitrary type of $\Mat_n(K)$, if and only if 
$\tr M = 0$ for all $M \in \Ma$, and either $\chr K \ne n$ or $I_n \notin \Ma$.
\end{corollary}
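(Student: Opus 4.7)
The plan is to derive this corollary as a quick synthesis of Theorem~\ref{main1} with Lemma~\ref{prelm}, plus two elementary observations. The first observation is that the standing hypothesis $\chr K \notin \{1,\ldots,n-1\}$ forces either $\chr K = 0$ or $\chr K \ge n$, so $K$ contains a prime subfield of at least $n$ elements; hence $\#K \ge n$, and the bound ``codimension less than $n$'' appearing in the corollary automatically coincides with the bound ``codimension less than $\min\{n,\#K\}$'' required by Theorem~\ref{main1}. The second observation is that $I_n \in \Ma$ is incompatible with $\Ma$ being a proper Mathieu subspace of any type: since $I_n^m = I_n \in \Ma$ for all $m$, applying any of the four Mathieu conditions with $a = I_n$ (and, in the two-sided case, $c = I_n$) forces all of $\Mat_n(K)$ to lie in $\Ma$.

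For the ``only if'' direction, I would simply invoke Theorem~\ref{main1} to obtain $\tr M = 0$ for all $M \in \Ma$, and then appeal to the second observation above to conclude $I_n \notin \Ma$, which in particular settles the case $\chr K = n$.

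For the ``if'' direction, I would split on whether $\chr K = n$. If $\chr K \ne n$, then together with the standing hypothesis this gives $\chr K \notin \{1,\ldots,n\}$, placing us in case 1) of Lemma~\ref{prelm}; otherwise $\chr K = n$ and $I_n \notin \Ma$ by assumption, placing us in case 2). In either case, the chain $1) \Rightarrow 2) \Rightarrow 3)$ of Lemma~\ref{prelm} yields that every element of $\rd(\Ma)$ is nilpotent. This immediately implies that $\Ma$ is a Mathieu subspace of every type: if $a \in \Mat_n(K)$ satisfies $a^m \in \Ma$ for all $m \ge 1$, then $a \in \rd(\Ma)$ is nilpotent, so $a^m = 0$ for $m \gg 0$ and all four conditions in the definition hold vacuously.

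The main ``obstacle'' is essentially nonexistent at this stage: all the substance is packed into Theorem~\ref{main1}, and the remaining work is a short case analysis together with the verification that the field-size hypothesis of that theorem is automatic under the corollary's assumptions.
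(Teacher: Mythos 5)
Your proof is correct and follows essentially the same route as the paper: Theorem~\ref{main1} for the ``only if'' direction (after checking that $\chr K \notin \{1,\ldots,n-1\}$ forces $\#K \ge n$), and Lemma~\ref{prelm} for the ``if'' direction. The only minor variation is that you derive $I_n \notin \Ma$ directly from the Mathieu axioms with $a = I_n$, whereas the paper deduces it from the secondary conclusion of Theorem~\ref{main1} that every element of $\rd(\Ma)$ is nilpotent; both are equally short and valid.
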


\begin{proof}
The `if'-part follows from lemma \ref{prelm}. To show the `only if'-part, suppose that 
$\Ma$ is a proper $K$-subspace of codimension less than $n$ of $\Mat_n(K)$. 
Since $\#K \ge \chr K \ge n$ if $\#K < \infty$, it follows that $\#K \ge n$ in any case and
that the codimension of $\Ma$ is less than
$\min \{n, \#K\}$. So $\tr M = 0$ for all $M \in \Ma$ on account of theorem \ref{main1}.
Since $I_n$ is not nilpotent and $I_n$ is contained in $\rd(\Ma)$
as soon as it is contained in $\Ma$, it additionally follows from theorem \ref{main1}
that $I_n \notin \Ma$, which completes the `only if'-part.
\end{proof}

\section{Sketch of the proof of theorem \ref{main1}}

Suppose that $\Ma$ is a subspace of codimension $c$ of $\Mat_n(K)$. Then the matrices $C \in \Mat_n(K)$ 
such that $\tr CM = 0$ for all $M \in \Ma$, which we call {\em constraints of $\Ma$}, form a subspace 
of dimension $c$ of $\Mat_n(K)$. Write $\Co$ for this space of constraints of $\Ma$.

Write $\rct(C)$ for the largest submatrix above the diagonal of $C \in \Mat_n(K)$ with $r$ rows.
So $\rct(C)$ has $n-r$ columns, corresponding to columns $r+1, \allowbreak r+2,\ldots, n$ of $C$.
Notice that $\tr CM$ is the sum of the entries of the Hadamard product of $C$ and the transpose
$M\tp$ of $M$. So the entries of $\rct(C)$ act as coefficients for the entries of $\rct(M\tp)$ and its
transpose, which we call $\rctt(M)$. So $\rctt(M)$ is the largest submatrix below the diagonal
of $M \in \Mat_n(K)$ with $r$ columns or $n-r$ rows.

The reason for using the formula $\tr CM = 0$ in the definition of constraint, 
instead of the sum of the entries of the Hadamard product, is that when we replace 
$\Ma$ by the isomorphic space $T^{-1} \Ma T$ for some $T \in \GL_n(K)$, 
the corresponding space of constraints $\Co$ gets replaced in a similar manner,
namely by $T^{-1} \Co T$.

In theorem \ref{rectang}, it is shown that $\Ma$ has idempotents of the forms 
$$
\left( \begin{array}{cc} I_r & \emptyset \\ * & \emptyset \end{array} \right)
\qquad \mbox{and} \qquad 
\left( \begin{array}{cc} \emptyset & \emptyset \\ * & I_{n-r} \end{array} \right)
$$
if $\rct$ is injective on $\Co$, after which theorem \ref{main1} is proved. The idea behind
theorem \ref{rectang} is more or less the following. We fix an arbitrary idempotent matrix $E$ of one of
both forms. Now for each nonzero $C \in \Co$, we want to have $\tr CE = 0$. Since
$\rct(C)$ is not the zero matrix, we can obtain $\tr CE = 0$ for some $C \in \Co$ by only changing the 
submatrix $\rctt(E)$ of $E$. The proof of theorem \ref{rectang} shows that this can be done for all
nonzero $C \in \Co$ simultaneously, so that $E$ can be changed into an idempotent of $\Ma$
by only changing the submatrix $\rctt(E)$ of $E$.

\medskip \noindent
The hard part of the proof of theorem \ref{main1} is the proof of theorem \ref{main2}.
This theorem claims that under certain conditions, among which $I_n \notin \Co$,
we can indeed obtain injectivity of $\rct$ on $\Co$ for some $r$ by way of linear 
conjugation. This conclusion leads to a contradiction
in the proof of theorem \ref{main1}, so that $I_n \notin \Co$ can be ruled out. 
So we get $I_n \in \Co$, which is equivalent to the main conclusion of theorem \ref{main1}:
$\tr M = 0$ for all $M \in \Ma$.

More precisely, the assertion of theorem \ref{main2} is the following. 
If we have $I_n \notin \Co$ besides certain conditions that are implied by those of theorem \ref{main1}, 
then after replacing $\Co$ by $T^{-1} \Co T$ for an appropriate $T \in \GL_n(K)$, 
there exists an $r$ with $1 \le r \le n-1$, such that $\rct(C)$ is not the zero matrix for all 
nonzero $C \in \Co$. We will even obtain a stronger conclusion: all nonzero entries of the rightmost 
nonzero column of any nonzero $C \in \Co$ are in $\rct(C)$. 

More formally, let $B' \in \Mat_n(\{0,1\})$ be the binary matrix, which
is $1$ on some spot, if and only if some element of $\Co$ has a rightmost nonzero entry at that spot.
Then we will obtain that all entries $1$ of $B'$ are in $\rct(B')$.
For that purpose, we apply a conjugation process on the space of
constraints, but not on $\Co$ directly. In order to get the conjugation process in the way we want,
we add the identity to $\Co$ by defining $\Co_n = \Co \oplus K I_n$, and apply the 
conjugation process on $\Co_n$.
 
We can easily reason out $I_n$ afterwards, because $I_n$ is not affected by conjugations.
Namely, if we define $B \in \Mat_n(\{0,1\})$ as the binary matrix, which
is $1$ on some spot, if and only if some element of $\Co_n$ has a rightmost nonzero entry at that spot,
then we will obtain that all entries $1$ of $B$ except $B_{nn} = 1$ are in $\rct(B)$. We have
$B_{nn} = 1$ because $I_n \in \Co_n$. With that, we have the only difference with $B'$, so that
$B' = B - e_n e_n\tp$, where $e_i$ is the $i$-th standard basis unit vector as a column vector.
In theorem \ref{main3}, we prove that we can obtain several properties
for $B$, which we discuss below. Under the conditions of theorem \ref{main1}, these properties 
imply that all entries $1$ of $B$ except $B_{nn}$ are in $\rct(B)$.

\medskip \noindent
Since we want the rightmost nonzero column of nonzero $C \in \Co$ to have some property, we define
$\Co_k$ as the space of $C \in \Co_n$ for which the rightmost nonzero column has index at most $k$, where
the index of the rightmost nonzero column of the zero matrix is $0$. So $\Co_0$ only contains the
zero matrix, and $\Co_n$ is correctly defined into itself.
Now we can define $B \in \Mat_n(K)$ alternatively by
$B_{ik} = 1$, if and only if $C_{ik} \ne 0$ for some $C \in \Co_k$. This is equivalent to
that $v_i \ne 0$ for some $v \in K^{\times n}$ which is $k$-th column of some matrix in $\Co_k$.

So the subspace formed by the $k$-th columns of matrices in $\Co_k$, 
which is isomorphic to $\Co_k / \Co_{k-1}$, plays a crucial role. 
We will denote this subspace of $\Co_k$ as $\Co_k e_k$, where
$e_k$ is the $k$-th standard basis unit vector. More generally, we define
$$
\Co_k v := \{Cv \mid C\in \Co_k\} \qquad (v \in K^{\times n})
$$
The dimension of 
$\Co_k v$ does not exceed $n$ and neither exceeds $\dim \Co_k$. If $K$ is infinite, then we take
for $d_k$ the maximum dimension that a space of the form $\Co_k v$ can have. In general,
we first replace $K$ by an infinite extension field $L$ of $K$, which is done by 
taking the tensor product over $K$ of $L$ and $\Co_k$, and next take $v \in L^{\times n}$.

So $d_k$ is the maximum dimension that a space of the form $(L \otimes_K \Co_k) \cdot v$ can have,
where $v \in L^{\times n}$. Our choice of $d_k$ is highly ambiguous, but lemma \ref{dimL} shows that
$d_k$ is still uniquely determined. For the actual definition of $d_k$, which is not ambiguous,
we take $L = K(x)$ and $v = x$, where $x = (x_1, x_2, \ldots, x_n)$. In any case, we have
$d_k \ge \max\{ \dim \Co_k v \mid v \in K^{\times n}\} \ge \dim \Co_k e_k$.

\medskip \noindent
In the proof of theorem \ref{main3}, we arrange the required properties for $B$ by way of linear 
conjugation of $\Co_n$. In order to do that, we additionally ensure that we get 
$d_k = \dim \Co_k e_k$ for each $k$. We achieve this by doing the following for $k = n,n-1,\ldots,1$,
in that order. We first 
choose a $v \in K^{\times n}$ such that $d_k = \dim \Co_k v$. If $\#K \ge d_k$, then such a $v$ indeed exists,
and if $\#K > d_k$, then we can additionally choose $v$ such that $v_k = 1$ (see lemma \ref{vcol}).
Next, we choose $T \in \GL_n(K)$ such that the $k$-th column $T e_k$ of $T$ equals $v$, and
replace $\Co_n$ by $T^{-1} \Co_n T$. 

By choosing $T$ properly (namely equal to the identity matrix at the right of the $k$-th column), 
$\Co_k$ gets replaced by $T^{-1} \Co_k T$, so that $\Co_k e_k$ gets replaced by 
$T^{-1} \Co_k T e_k = T^{-1} \Co_k v$, which is isomorphic to $\Co_k v$ (see (ii) of lemma \ref{Tlem}). 
This is how $d_k = \dim \Co_k e_k$ is obtained, but what we ignore here is the problem, 
that $d_j = \dim \Co_j e_j$ for $j > k$ and possibly some other properties
which $B$ already satisfies, should not be affected. Such preservation problems, 
which we will mostly ignore in this section, makes the proof of theorem \ref{main3} 
highly technical in nature. 

One of these preservation problems can be solved if we can take $v_k = 1$, because in that case,
we can choose $T$ equal to the identity matrix outside its $k$-th column $v$. If we additionally take 
$v_{k+1} = v_{k+2} = \cdots = v_n = 0$, which is also possible, then the $j$-th column of $B$ will be 
preserved for all $j > k$, provided this $j$-th column of $B$ is decreasing above the diagonal (see 
the proof of (ii) of theorem \ref{main3}). 

Once we have $d_k = \dim \Co_k e_k$ in theorem \ref{main3}, we additionally have that $B$ is increasing
in every row, provided $\#K \ge d_n$ (this is shown in (ii) of lemma \ref{zeroL}, where lemma \ref{dimL}
is used to obtain the condition of lemma \ref{zeroL}).

Write $b_j$ for the number of ones in column $j$ of $B$.
Another property to arrange is that $b_k = \dim \Co_k e_k$ as well as $d_k = \dim \Co_k e_k$, and lemma 
\ref{small} tells us that for this purpose, $\Co_k e_k$ should be spanned by standard basis unit vectors.
We do this by taking $L \in \GL_n(K)$ lower triangular, such that $L \Co_k e_k$ is spanned by 
standard basis unit vectors. Since $L$ is lower triangular and invertible, we can prove that $\Co_k e_k$ is 
replaced by $L \Co_k e_k$ if $\Co_n$ is replaced by $L \Co_n L^{-1}$ (see (ii) of lemma \ref{Tlem}). 
So $\Co_k e_k$ will be spanned by standard basis unit vectors after this replacement.

If $\#K > \min\{d_{n-1},n-1\}$, then we must additionally obtain that every column of $B$ is increasing
above the diagonal. If $\Co_k e_k$ is spanned by standard basis unit vectors, then there exists
a permutation $P$ such that $P \Co_k e_k$ is spanned by the first $\dim \Co_k e_k$ standard basis
unit vectors. But if we replace $\Co_n$ by $P \Co_n P^{-1}$, then property $d_j = \dim \Co_j e_j$
could be affected for some $j > k$, as well as several properties that $B$ satisfies. 

For this reason, we take $P$ such that only the first $k-1$ coordinates of $\Co_k e_k$ are permuted.
These coordinates correspond to the part above the diagonal of the $k$-th column of $B$.
If we replace $\Co_n$ by $P \Co_n P^{-1}$, then $\Co_k e_k$ will be replaced by $P \Co_k e_k$
(see (ii) of lemma \ref{Tlem}). In order to preserve properties of $B$, $P$ and also $L$ only act on 
coordinates $i$ such that $B_{ij} = 1$ for all $j > k$, so that $v \mapsto Lv$ and $v \mapsto Pv$
are isomorphisms of $\Co_j e_j$ for all $j > k$ (as far as the respective $\Co_j e_j$ are generated by 
standard basis unit vectors, but that is inductively arranged).

\section{A criterium for having idempotents}

Let $K$ be a field and $\Ma$ be a subspace of $\Mat_n(K)$. Let $\Co$ be the subspace of matrices 
$C \in \Mat_n(K)$ such that $\tr CM = 0$ for all $M \in \Ma$. Let $x = (x_1,x_2,\ldots, x_n)$
be $n$ indeterminates. Write $\rct(M)$ for the submatrix consisting of the first $r$ rows
and the rightmost $n-r$ columns of $M$ for all $M \in \Mat_n(K)$.

\begin{theorem} \label{rectang}
Suppose that $1 \le r \le n-1$. 
\begin{enumerate}[{\upshape (i)}]

\item If for all $C \in \Co$ such that $\rct(C)$ is the zero matrix,
the leading principal minor matrix of size $r$ of $C$ has trace zero, then $\Ma$ contains an 
idempotent of rank $r$ of the form
\begin{equation} \label{idp}
\left( \begin{array}{cc} I_r & \emptyset \\ * & \emptyset \end{array} \right)
\end{equation}

\item If for all $C \in \Co$ such that $\rct(C)$ is the zero matrix,
the trailing principal minor matrix of size $n-r$ of $C$ has trace zero, then $\Ma$ contains an 
idempotent of rank $n-r$ of the form
\begin{equation} \label{idp'} 
\left( \begin{array}{cc} \emptyset & \emptyset \\ * & I_{n-r} \end{array} \right) \tag{\ref{idp}$'$}
\end{equation}

\end{enumerate}
More precisely, the dimensions of the affine spaces of idempotents in $\Ma$ of the forms 
\eqref{idp} and \eqref{idp'} respectively, are both equal to that of 
$$
\Na := \left\{ M \in \Ma \,\left|\, M = \left( \begin{array}{cc} \emptyset & \emptyset \\ 
\tilde{M} & \emptyset \end{array} \right) \mbox{ for some } \tilde{M} \in \Mat_{n-r,r}(K) 
\right.\right\}
$$
\end{theorem}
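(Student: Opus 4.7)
The plan is to reduce the existence of idempotents of the two block forms to a solvability question for a linear equation over $\Co$, and then to answer that question via trace-pairing duality.

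First I would observe the crucial structural fact that the two block shapes in (\ref{idp}) and (\ref{idp'}) are \emph{automatically} idempotent: for any $A \in \Mat_{n-r,r}(K)$, a one-line block multiplication shows that
$$
\begin{pmatrix} I_r & 0 \\ A & 0 \end{pmatrix}
\quad\text{and}\quad
\begin{pmatrix} 0 & 0 \\ A & I_{n-r} \end{pmatrix}
$$
both square to themselves. So the task is not to arrange idempotence but simply to choose $A$ so that the matrix $E$ lies in $\Ma$, equivalently so that $\tr CE = 0$ for every $C \in \Co$. Writing $C$ in blocks $\bigl(\begin{smallmatrix} C_{11} & C_{12} \\ C_{21} & C_{22} \end{smallmatrix}\bigr)$ compatible with $E$ and noting $C_{12} = \rct(C)$, a short computation yields, in case i),
$$
\tr CE = \tr C_{11} + \tr\bigl(\rct(C)\, A\bigr),
$$
and in case ii), $\tr CE = \tr C_{22} + \tr(\rct(C)\, A)$. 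So in either case I must find $A \in \Mat_{n-r,r}(K)$ making the functional $C \mapsto \tr(\rct(C)\, A)$ on $\Co$ equal the functional $C \mapsto -\tr C_{11}$ (resp.\ $-\tr C_{22}$).

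The key step is then a duality argument. By nondegeneracy of the trace pairing $\Mat_{r,n-r}(K) \times \Mat_{n-r,r}(K) \to K$, as $A$ ranges over $\Mat_{n-r,r}(K)$ the functional $C \mapsto \tr(\rct(C)\, A)$ runs over precisely those linear forms on $\Co$ that factor through $\rct|_\Co$, i.e.\ the annihilator of $\ker(\rct|_\Co) = \{C \in \Co \mid \rct(C) = 0\}$ inside $\Co^*$. Hence the equation is solvable exactly when the target functional vanishes on that kernel, which in case i) is the condition ``$\tr C_{11} = 0$ for every $C \in \Co$ with $\rct(C) = 0$'', and in case ii) is the analogous condition on $\tr C_{22}$. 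These are precisely the hypotheses.

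Finally, for the dimension statement, when the affine solution set is non-empty its underlying linear space is the kernel of $A \mapsto (C \mapsto \tr(\rct(C)\, A))$, and by the same duality this kernel has dimension $r(n-r) - \dim \rct(\Co)$. On the other hand, an element of $\Na$ is parametrized by $\tilde M \in \Mat_{n-r,r}(K)$ with $\tr(\rct(C)\, \tilde M) = 0$ for all $C \in \Co$, which is the annihilator of $\rct(\Co)$ and so also has dimension $r(n-r) - \dim \rct(\Co)$. I do not expect a serious obstacle here; once one notices the automatic idempotence, the rest is pure linear-duality bookkeeping, and the only thing to watch is keeping the two cases cleanly parallel so that the affine linearization used for the dimension count is visibly the same space as $\Na$.
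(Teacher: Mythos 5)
Your proposal is correct and is essentially the paper's own argument: both rest on the observation that matrices of the two block shapes are automatically idempotent of ranks $r$ and $n-r$, on the identity $\tr CE = \tr C_{11} + \tr\bigl(\rct(C)\,A\bigr)$ (resp.\ with $C_{22}$ in case ii)), and on trace-pairing duality over $\Co$ to obtain both solvability under the stated hypotheses and the dimension count against $\Na$. The paper merely packages the duality more concretely, choosing constraints $C_1,\dots,C_m$ whose images $\rct(C_1),\dots,\rct(C_m)$ span $\rct(\Co)$ and comparing dimensions with the space of matrices having $\lambda I_r$ in the upper-left block, which coincides in content with your annihilator computation in $\Co^*$.
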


\begin{proof}
Since (ii) is similar to (i) (or take the transpose and conjugate with the reversing permutation to 
reduce to (i)), we only prove (i). Notice that any matrix of the form \eqref{idp} is an idempotent of 
rank $r$, and that $0 \le \dim_K \Na \le (n-r)r$ because $0 \in \Na$. Take $m$ such that 
$\dim_K \Na = (n-r)r - m$. Then there are constraints $C_1, C_2, \ldots, C_m \in \Co$ such that
$$
\Na = \left\{\left. M = \left( \begin{array}{cc} \emptyset & \emptyset \\ 
\tilde{M} & \emptyset \end{array} \right) \mbox{ for some } \tilde{M} \in \Mat_{n-r,r}(K) 
\,\right|\, \tr C_i M = 0 \mbox{ for all } i\right\}
$$
$\tilde{M}$ and its transpose $\tilde{M}\tp$ are submatrices of $M$ and its transpose $M\tp$ 
respectively, and we have $\tilde{M}\tp = \rct(M\tp)$.
By definition of $\Na$, for any constraint $C' \in \Co$, $\rct(C')$ is contained in the span of
the corresponding submatrices of $C_1, C_2, \ldots, C_m \in \Co$. Hence we can write each
$C' \in \Co$ as
\begin{equation} \label{Czero}
C' = \lambda_1 C_1 + \lambda_2 C_2 + \cdots + \lambda_m C_m + C^{*}
\end{equation}
such that $\rct(C^{*})$ is the zero matrix. By assumption, the leading principal minor 
matrix of size $r$ of $C^{*}$ has trace zero. Hence we have
\begin{equation} \label{CE}
\tr C^{*} E = 0
\end{equation}
for all $E$ of the form \eqref{idp}.

Since there are $m$ independent constraints on essentially $(n-r)r + 1$ coordinates, 
the dimension of the space
$$
\left\{\left. M = \left( \begin{array}{cc} \lambda I_r & \emptyset \\ 
\tilde{M} & \emptyset \end{array} \right) \,\right|\, \lambda \in K, \tilde{M} \in \Mat_{n-r,r}(K) 
\mbox{ and } \tr C_i M = 0 \mbox{ for all } i\right\}
$$
is $(n-r)r + 1 - m$, which is one larger than that of its subspace $\Na$. Hence the dimension of
its affine subspace 
$$
\Ea := \left\{\left. M = \left( \begin{array}{cc} I_r & \emptyset \\ 
\tilde{M} & \emptyset \end{array} \right) \,\right|\, \tilde{M} \in \Mat_{n-r,r}(K) 
\mbox{ and } \tr C_i M = 0 \mbox{ for all } i\right\}
$$
which contains all idempotents of the form \eqref{idp} in $\Ma$, is $(n-r)r - m$, just as the dimension of 
$\Na$. 

Now it remains to show that $\Ea$ does not contain any idempotent outside $\Ma$. 
For that purpose, let $E \in \Ea$ and suppose that there exist a $C' \in \Co$ such that $\tr C'E \ne 0$. 
By \eqref{Czero} and by definition of $\Ea$, 
there exists a $C^{*} \in \Co$ such that $\tr C^{*}E \ne 0$ and $\rct(C^{*})$ is the zero matrix. 
This contradicts \eqref{CE}, so a $C'$ as above does not exist and we have $\Ea \subseteq \Ma$.
Hence $\Ea$ is the affine subspace of idempotents of the form \eqref{idp} in $\Ma$.
\end{proof}

\begin{corollary} \label{rectangcor}
Assume $I_n \notin \Co$ and suppose that for some $r$ with $1 \le r \le n-1$ we have the following:
all $C \in \Co \oplus K I_n$, such that $\rct(C)$ is the zero matrix, are dependent of $I_n$. 
Then $\Ma$ contains an idempotent of rank $r$ and another one of rank $n-r$, such that the sum of both 
idempotents is unipotent.

Furthermore, if $\Ma$ is a Mathieu subspace of any type, then $\Ma = \Mat_n(K)$ and $\Co = 0$.
\end{corollary}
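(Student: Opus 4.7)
The plan is to check that the hypothesis of the corollary forces the trace conditions in both parts i) and ii) of theorem~\ref{rectang} to hold vacuously, to extract the two resulting idempotents, and then to exploit the Mathieu property of the sum.

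First I would take any $C \in \Co$ with $\rct(C)$ the zero matrix. Since $\Co \subseteq \Co \oplus K I_n$, the corollary's hypothesis gives $C = \lambda I_n$ for some $\lambda \in K$; combined with $\Co \cap K I_n = 0$ (which follows from $I_n \notin \Co$), this forces $C = 0$. Consequently the leading principal minor of size $r$ and the trailing principal minor of size $n - r$ of such a $C$ are both zero, and theorem~\ref{rectang} delivers idempotents $E_1 \in \Ma$ of rank $r$ of the form (\ref{idp}) and $E_2 \in \Ma$ of rank $n - r$ of the form (\ref{idp'}). Writing out the block shape gives
$$
E_1 + E_2 = \left( \begin{array}{cc} I_r & \emptyset \\ * & I_{n-r} \end{array} \right) = I_n + N,
$$
where $N$ is supported only in the bottom-left $(n - r) \times r$ block; one checks directly that $N^2 = 0$, so $E_1 + E_2$ is unipotent with inverse $I_n - N$. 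This establishes the first assertion.

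For the second assertion, suppose $\Ma$ is a Mathieu subspace of any of the four types. For each idempotent $E_i$ one has $E_i^m = E_i \in \Ma$ for every $m \ge 1$, so the Mathieu condition yields $\Mat_n(K) E_i \subseteq \Ma$ (left), $E_i \Mat_n(K) \subseteq \Ma$ (right), both (pre-two-sided), or $\Mat_n(K) E_i \Mat_n(K) \subseteq \Ma$ (two-sided). In the two-sided case, the latter is already a nonzero two-sided ideal of the simple ring $\Mat_n(K)$, hence equals $\Mat_n(K)$. In each one-sided case, summing the two inclusions for $i = 1, 2$ gives $\Mat_n(K)(E_1 + E_2) \subseteq \Ma$ or $(E_1 + E_2)\Mat_n(K) \subseteq \Ma$, and the invertibility of $E_1 + E_2$ from the previous step forces $\Ma = \Mat_n(K)$. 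Finally, $\Co = 0$ follows from the nondegeneracy of the trace pairing $(A, B) \mapsto \tr AB$ on $\Mat_n(K)$.

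I do not anticipate a substantive obstacle: the corollary is essentially a clean packaging of theorem~\ref{rectang}, where the extra hypothesis is arranged precisely so that the trace-vanishing assumptions there become vacuous, and the one remaining idea (combining $E_1$ and $E_2$ into something unipotent so as to reduce the one-sided cases to invertibility) is forced by the block forms.
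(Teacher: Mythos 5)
Your proof is correct and follows essentially the same route as the paper: the hypothesis together with $I_n \notin \Co$ makes the trace conditions of theorem \ref{rectang} vacuous, producing the two block idempotents whose unipotent (hence invertible) sum gives $A = A(E_1+E_2)^{-1}E_1 + A(E_1+E_2)^{-1}E_2 \in \Ma$ for every $A$, so $\Ma = \Mat_n(K)$ and $\Co = 0$. The only cosmetic difference is that you dispatch the two-sided case directly via simplicity of $\Mat_n(K)$, whereas the paper handles the left and right cases explicitly (to which the other types reduce).
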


\begin{proof}
Since $I_n \notin \Co$, we see that all $C \in \Co$, such that $\rct(C)$ is the zero matrix, are 
entirely zero by assumption. By (i) of theorem \ref{rectang}, $\Ma$ contains an idempotent of the form
$$
E := \left( \begin{array}{cc} I_r & \emptyset \\ * & \emptyset \end{array} \right)
$$
By (ii) of theorem \ref{rectang}, $\Ma$ contains another idempotent of the form
$$
E' := \left( \begin{array}{cc} \emptyset & \emptyset \\ * & I_{n-r} \end{array} \right)
$$
Notice that $E + E'$ is unipotent and hence invertible. If $\Ma$ is a left Mathieu subspace and 
$A \in \Mat_n(K)$, then 
$$
A = A I_n = A (E + E')^{-1} E + A (E + E')^{-1} E' \in \Ma
$$
because $E^m = E \in \Ma$ and $(E')^m = E' \in \Ma$ for all $m \ge 1$. Thus
$\Ma = \Mat_n(K)$ and $\Co = 0$ in the case where $\Ma$ is a left Mathieu subspace. The case 
where $\Ma$ is a right Mathieu subspace is similar. 
\end{proof}

\noindent
Write $x$ be the column vector $(x_1, x_2, \ldots, x_n)$. We will show that theorem \ref{main2} below
implies theorem \ref{main1}.

\begin{theorem} \label{main2}
Suppose that $I_n \notin \Co$ and $0 < \dim_K \Co < n$. Let $\Co_n = \Co \oplus K I_n$ and suppose that
\begin{align*}
\#K \ge r+1 &:= \dim_{K(x)} \big((K(x) \otimes_K \Co_n) \cdot x \big) \\ 
 &\hphantom{:}= \dim_{K(x)} \sum_{C \in \Co_n} K(x) \cdot C \cdot x
\end{align*}
Then we can obtain corollary \ref{rectangcor} (with a corresponding $r$) by way of linear 
conjugation (replacing $\Ma$ by $T^{-1} \Ma T$ and $\Co$ by $T^{-1} \Co T$ for some $T \in \GL_n(K)$).
\end{theorem}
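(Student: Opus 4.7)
The plan is to apply theorem~\ref{main3} to the space $\Co_n$ and then to read off the hypothesis of corollary~\ref{rectangcor} from the combinatorial information that \ref{main3} produces. The precise input for \ref{main3} is $\#K \ge r+1$, which matches the hypothesis of the present theorem (here $r+1$ equals the invariant $d_n$ from the sketch). This yields a $T \in \GL_n(K)$ such that, after replacing $\Co_n$ by $T^{-1}\Co_n T$, the associated binary matrix $B \in \Mat_n(\{0,1\})$---with $B_{ij}=1$ exactly when some element of $\Co_n$ whose rightmost nonzero column is $j$ has a nonzero $(i,j)$-entry---has the property that every entry $B_{ij}=1$ with $(i,j)\ne(n,n)$ lies in $\rct(B)$. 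Note that $B_{nn}=1$ is automatic, since $I_n \in \Co_n$ contributes it and conjugation preserves $I_n$.

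Next I would verify the hypothesis of corollary~\ref{rectangcor}. Let $C \in \Co_n$ satisfy $\rct(C)=0$, and suppose $C \ne 0$. Let $j$ be the index of the rightmost nonzero column of $C$ and pick any $i$ with $C_{ij} \ne 0$; then $B_{ij}=1$ by construction. If $(i,j) \ne (n,n)$, the property of $B$ forces $i \le r$ and $j \ge r+1$, so $C_{ij}$ would be an entry of $\rct(C)=0$, a contradiction. Hence every such $(i,j)$ equals $(n,n)$, which forces $j=n$ and $Ce_n = C_{nn} e_n$. Setting $c := C_{nn}$, the matrix $C - cI_n$ also satisfies $\rct(C-cI_n)=0$ (since $\rct(I_n)=0$) and has zero $n$-th column, so its rightmost nonzero column, if any, has index strictly less than $n$; running the same dichotomy on $C - cI_n$ only leaves the case $(i',j')=(n,n)$, which is impossible here, so $C - cI_n = 0$ and $C = cI_n \in K I_n$. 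This is precisely the hypothesis of corollary~\ref{rectangcor}, which then yields the conclusion. The range $1 \le r \le n-1$ needed by that corollary is automatic: $r+1 \le n$ since $(K(x) \otimes_K \Co_n)\,x \subseteq K(x)^n$, while $r+1 \ge 2$ because $\Co \ne 0$ together with $I_n \notin \Co$ produces some $C \in \Co_n$ that is not a scalar matrix, whence $Cx$ and $x = I_n x$ are linearly independent over $K(x)$.

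The main obstacle is not the deduction above, which is bookkeeping once theorem~\ref{main3} is in hand, but the proof of theorem~\ref{main3} itself: one must arrange by linear conjugation that the monotonicity and dimension-equality conditions on $B$ hold simultaneously for all columns $k$, subject to the preservation problems outlined in Section~2. The delicate point is that the conjugations used to normalize the $k$-th column must not spoil the normalizations already achieved for columns $j > k$, which is why only special forms (identity outside the $k$-th column, lower triangular, or permutations fixing the last coordinates) are admissible at each stage of the induction.
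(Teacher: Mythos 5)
Your overall architecture matches the paper's: apply theorem~\ref{main3} (with the input $\#K \ge r+1 = d_n$), show that all $1$-entries of the resulting matrix $B$ other than $B_{nn}$ lie in $\rct(B)$, and feed this into corollary~\ref{rectangcor}; and the back end of your argument---deducing $C = cI_n$ from the rectangle property by looking at the rightmost nonzero column of $C$ and then of $C - cI_n$---is correct and equivalent to what the paper does. Your side remark about why $1 \le r \le n-1$ (via $r+1 \le \dim K(x)^n$ and $Cx, x$ independent over $K(x)$ for non-scalar $C$) is sound and in fact supplies a check the paper glosses over.

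The gap is in the middle step. You write that theorem~\ref{main3} ``yields'' a conjugation after which every $B_{ij}=1$ with $(i,j) \ne (n,n)$ lies in $\rct(B)$, and you later call this deduction bookkeeping. But theorem~\ref{main3} does not state the rectangle property; it states i) $b_j = \dim_K \Co_j e_j$, $b_n = r+1$, and $B$ is increasing in rows, ii) $B$ is decreasing in columns above the diagonal (conditionally), and iii) $b_n > \min\{b_{n-1},n-1\}$ and $B_{(n-1)n} \ge B_{n(n-1)}$ (given $I_n \in \Co_n$). Deriving the rectangle property from i)--iii) is precisely the substance of the paper's proof of theorem~\ref{main2}, and it is not an immediate consequence: one needs the hypothesis $\dim_K \Co < n$ (which you never invoke) to run the dimension count $n \ge \dim_K \Co_n = \sum_j b_j \ge (n-k) + (r+1)$ and conclude $b_1 = \cdots = b_r = 0$, i.e.\@ the first $r$ columns of $B$ vanish; then one uses $b_n = r+1$ together with the column-decreasing property to pin down the zeros $B_{(r+1)n} = \cdots = B_{(n-1)n} = 0$ in the last column, then iii) to force $B_{n(n-1)} = 0$, and finally the row-increasing property to propagate zeros leftward across all of rows $r+1, \ldots, n$. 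None of these steps appears in your sketch, and the $\dim_K \Co < n$ hypothesis plays no role in your argument, which is a strong sign that the deduction has been skipped rather than compressed. You should spell out how i)--iii) combine with $\dim_K \Co_n \le n$ to give the rectangle property; as written, the core of the proof is asserted rather than proved.
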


\begin{proof}[Proof of theorem \ref{main1}.]
The primary result to show is, that $\tr M = 0$ for all $M \in \Ma$. This is equivalent to 
$I_n \in \Co$, so suppose that $I_n \notin \Co$. Let $\Co_n = \Co \oplus K I_n$. 
By assumption, $\dim_K \Co < \min \{n, \#K\}$. Hence $\dim_K \Co < n$ and
$$ 
\dim_{K(x)} \sum_{C \in \Co_n} K(x) \cdot C \cdot x \le 
\dim_{K(x)} \sum_{C \in \Co_n} K(x) \cdot C = \dim_K \Co_n \le \#K
$$
Now theorem \ref{main2} above gives a contradiction, so $I_n \in \Co$ and hence
$\tr M = 0$ for all $M \in \Ma$. 

Since $\Ma$ is proper by assumption, $I_n \notin \Ma$. Hence the secondary results
follow from (2) $\Rightarrow$ (3) $\Rightarrow$ (4) of lemma \ref{prelm}.
\end{proof}

\section{A binary matrix about a filtration on the constraint space}

Write $e_i$ for the $i$-th standard basis unit vector as a column vactor. 
Let $\Co_n$ be a $K$-subspace of $\Mat_n(K)$ and define
$$
\Co_k := \{ C \in \Co_n \mid C e_{k+1} = C e_{k+2} = \cdots = C e_n = 0 \}
$$
Then $0 = \Co_0 \subseteq \Co_1 \subseteq \Co_2 \subseteq \cdots \subseteq \Co_n$ is a filtration
in the sense that we can take quotients $\Co_j / \Co_{j-1}$, which are isomorphic to
$\Co_j e_j$, where $\Co_j v := \{Cv \mid C \in \Co_j\}$.
Define the binary matrix $B \in \Mat_n(\{0,1\})$ by
$$
B_{ij} := \dim_K e_i\tp \Co_j e_j
$$
for all $i,j$, where 
$$
e_i\tp \Co_j v := \{e_i\tp Cv \mid C \in \Co_j\} = \{(Cv)_i \mid C \in \Co_j\}
$$
Write $b_j$ for the number of ones in column $j$ of $B$.

Theorem \ref{main3} below can be formulated in terms of the binary matrix $B$.
We will show that it implies theorem \ref{main2} (and hence also theorem \ref{main1}).
The next section will be devoted to the proof of theorem \ref{main3}.

\begin{theorem} \label{main3}
Suppose that $\#K \ge r+1$, where $r+1$ is as defined in theorem \ref{main2}. 
By way of linear conjugation, we can obtain the following.
\begin{enumerate}[{\upshape (i)}]
 
\item $b_j = \dim_K \Co_j e_j$ for all $j$, $b_n = r+1$, and $B$ is increasing in every 
row, i.e.\@ $B_{ij} = 0$ implies $B_{i(j-1)} = 0$ for every $i,j$ such that $j > 1$.

\item If $\#K > \min\{b_{n-1},n-1\}$, then $B$ is decreasing above the diagonal 
in every column, i.e.\@ $B_{ij} = 0$ implies $B_{(i+1)j} = 0$ for every $i,j$ such that $i + 1 < j$.

\item If $I_n \in \Co_n$, then $b_n > \min\{b_{n-1},n-1\}$ and $B_{(n-1)n} \ge B_{n(n-1)}$.

\end{enumerate}
\end{theorem}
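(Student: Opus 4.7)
The plan is to prove the theorem by an iterative linear-conjugation procedure descending $k$ from $n$ to $1$, arranging the desired structure on column $k$ of $B$ at stage $k$ while preserving what has been established for columns $j > k$. Since $d_k \le d_n = r+1 \le \#K$ for every $k$ (where $d_k := \max_v \dim_L (L \otimes_K \Co_k) v$ for an infinite extension $L$ of $K$, well-defined by Lemma \ref{dimL}), the existence Lemma \ref{vcol} guarantees enough room in $K^n$ to make the choices below.

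Stage $k$ applies three conjugations in succession. First, by Lemma \ref{vcol}, pick $v \in K^n$ with $v_k = 1$, $v_{k+1} = \cdots = v_n = 0$, and $\dim_K \Co_k v = d_k$. Set $T := I_n + (v - e_k) e_k\tp$, so $T e_k = v$, $T e_j = e_j$ for $j \ne k$, and $T$ is invertible; replace $\Co_n$ by $T^{-1} \Co_n T$. Because $T$ fixes each $e_j$ with $j > k$, Lemma \ref{Tlem} gives that $\Co_j$ is replaced by $T^{-1} \Co_j T$ for every $j \ge k$, and in particular $\Co_k e_k$ is replaced by $T^{-1} \Co_k v$, of dimension $d_k$. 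Second, take a lower-triangular $L \in \GL_n(K)$ acting as the identity on the last $n-k$ coordinates, chosen so that $L \cdot \Co_k e_k$ is spanned by standard basis unit vectors, and replace $\Co_n$ by $L \Co_n L^{-1}$; this secures $b_k = \dim_K \Co_k e_k = d_k$. Third, replace $\Co_n$ by $P \Co_n P^{-1}$ for a permutation $P$ supported on $\{1,\ldots,k-1\}$, chosen so that column $k$ of $B$ becomes decreasing above the diagonal, arranging property ii) for column $k$.

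The main technical hurdle is verifying that stage $k$ preserves, for every $j > k$, both property i) and property ii) already established inductively. All three of $T$, $L$, $P$ act as the identity on the last $n-k$ coordinates, so the only possible changes to column $j$ of $B$ for $j > k$ occur in rows $i < k$. A case split on $B_{kj}$ is decisive: if $B_{kj} = 0$, the $k$-th coordinate of every $w \in \Co_j e_j$ vanishes and $T^{-1} w = w$; if $B_{kj} = 1$, the already-established decreasing-above-diagonal property of column $j$ forces $B_{ij} = 1$ for all $i \le k$, so $\Co_j e_j$ contains each of $e_1, \ldots, e_k$ and the $T^{-1}$-images span the same subspace. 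The restricted $L$ and $P$ are arranged so that they only act on those coordinates $i$ with $B_{ij} = 1$ for every $j > k$, hence act as isomorphisms on each such $\Co_j e_j$. The simultaneous interlocking of i) and ii) in the descending induction is what makes the preservation work.

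The row-increasing claim in i) follows from Lemma \ref{zeroL}: once $b_k = \dim_K \Co_k e_k = d_k$ for every $k$, the generic maximality encoded by the $d_k$ translates into the implication $B_{ij} = 0 \Rightarrow B_{i(j-1)} = 0$. For iii), note that $I_n$ is fixed by every linear conjugation, so $I_n \in \Co_n$ persists throughout. Since $I_n \cdot x = x$ involves the indeterminate $x_n$ while every $C \cdot x$ with $C \in \Co_{n-1}$ is free of $x_n$ (because $C e_n = 0$), the vector $x$ witnesses $\dim_{K(x)}(K(x) \otimes \Co_n)x \ge \dim_{K(x)}(K(x) \otimes \Co_{n-1})x + 1$, whence $d_n \ge d_{n-1} + 1$ and hence $b_n \ge b_{n-1} + 1$; combined with $b_n \le n$, this rules out $b_{n-1} = n$ and yields $b_n > \min\{b_{n-1}, n-1\}$. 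The inequality $B_{(n-1)n} \ge B_{n(n-1)}$ then follows from the interplay between the standard-basis spanning of $\Co_n e_n$ (which always contains $e_n$ from $I_n$), the column-decreasing property of column $n$ secured at stage $n$, and the specific restricted permutation used at stage $n-1$, which does not move rows $n-1$ and $n$.
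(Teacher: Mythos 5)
Your single-pass induction has two structural problems that the paper's hypotheses are specifically calibrated around. First, at every stage $k<n$ you pick $v$ with $v_k=1$, but corollary \ref{vcol} only guarantees $v_k=1$ when $\#K>d_k$; the theorem assumes only $\#K\ge r+1\ge d_k$, and equality can occur (e.g.\ $K$ finite with $\#K=r+1=d_k$), so $T$ cannot be taken equal to the identity outside column $k$ and your preservation argument for columns $j>k$ of $B$ breaks down. Second, in the case $B_{kj}=1$ you invoke the decreasing-above-the-diagonal property of column $j$, which is precisely the conclusion of ii) and is only available under the extra hypothesis $\#K>\min\{b_{n-1},n-1\}$; part i) must be established without it. This is why the paper proves i) with a double pass: a first pass arranging only $\dim_K\Co_j e_j=d_j$ (there nothing about $B$ needs preserving, so no $v_k=1$ is needed), and a second pass by lower-triangular conjugations whose preservation of columns $j>k$ uses only the row-increasing property of $B$ together with v) of lemma \ref{Tlem}; your single pass is essentially the paper's argument for ii), which genuinely needs the stronger cardinality hypothesis. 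A related slip: a lower-triangular $L$ that is the identity on the last $n-k$ coordinates and sends $\Co_k e_k$ to a span of standard basis unit vectors need not exist (try $\Co_k e_k=K(e_1+e_n)$); what is needed, and what suffices for preservation, is that $L$ fixes every row $i$ with $e_i\tp\Co_k e_k=0$.

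The argument for iii) is also flawed. From the fact that each $Cx$ with $C\in\Co_{n-1}$ has entries free of $x_n$ you conclude $x\notin(K(x)\otimes_K\Co_{n-1})x$, but the coefficients of a $K(x)$-linear combination may involve $x_n$: for $n=2$ and $\Co_1=\{C\mid Ce_2=0\}$, $\Co_2=\Co_1+KI_2$, the vectors $(x_1,0)\tp$ and $(0,x_1)\tp$ lie in $\Co_1x$, so $x\in(K(x)\otimes_K\Co_1)x$ and $d_1=d_2=2$; hence $d_n\ge d_{n-1}+1$, and with it $b_n\ge b_{n-1}+1$, is false in general (this is exactly why the statement only claims $b_n>\min\{b_{n-1},n-1\}$). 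The paper instead assumes $b_n\le n-1$, picks $i$ with $B_{in}=0$, deduces $e_i\tp\Co_{n-1}=0$ from ii) of lemma \ref{zeroL}, and compares dimensions along the vector $e_i+x_{n-1}e_{n-1}$, where adding $I_n$ forces a strict increase, giving $b_{n-1}<d_n=b_n$. Finally, $B_{(n-1)n}\ge B_{n(n-1)}$ does not follow from the permutation used at stage $n-1$ as you suggest; it is the ``in particular'' part of iii) of lemma \ref{zeroL}, which requires $b_{n-1}=\dim_K\Co_{n-1}e_{n-1}$ and $I_n\in\Co_n$ and rests on its own minor-degree argument, so this half of iii) is left unproved in your proposal.
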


\begin{proof}[Proof of theorem \ref{main2}.]
On account of theorem \ref{main3}, we can apply a linear conjugation on $\Co_n$ such that the
assertions of theorem \ref{main3} are satisfied. By (i), we have $\#K \ge r+1 = b_n$ and by (iii), 
we have $b_n > \min\{b_{n-1},n-1\}$. Hence the condition $\#K > \min\{b_{n-1},n-1\}$ 
in (ii) is fulfilled.
\begin{enumerate}[(i)] 

\item
\emph{We first show that the first $r$ columns of $B$ are zero.}
For that purpose, take $k$ minimal such that 
$b_k \ge 1$. On account of (i) of theorem \ref{main3}, we even have $b_j \ge 1$ for all $j \ge k$.
Since $\Co_j / \Co_{j-1}$ is isomorphic to $\Co_j e_j$ for all $j$, it follows from
(i) of theorem \ref{main3} that $b_j = \dim_K \Co_j e_j = \dim_K \Co_j / \Co_{j-1}$ for all $j$, and
\begin{alignat*}{7}
n &\ge \dim_K \Co + 1 = \dim_K \Co_n \hspace*{-50cm} \\
&= \dim_K \Co_1 / \Co_0 + \dim_K \Co_2 / \Co_1 + \cdots + \dim_K \Co_n / \Co_{n-1} \hspace*{-50cm}\\
&=   b_1 &&{}+ \cdots &&{}+ b_{k-1} &&{}+ b_k &&{}+ \cdots &&{}+ b_{n-1} &&{}+ b_n \\
&\ge 0   &&{}+ \cdots &&{}+ 0       &&{}+ 1   &&{}+ \cdots &&{}+ 1       &&{}+ (r + 1) \hspace{11mm} \\ 
&= n - k + r + 1 \hspace*{-50cm}
\end{alignat*}
So $k \ge r+1$ and indeed $b_1 = b_2 = \cdots = b_r = 0$.

\item
\emph{We next show that $B_{nn} = 1$ is the only nonzero entry in the last $n - r$
rows of $B$.} At first, $B_{nn} = 1$ follows directly from $I_n \in \Co_n$.
If $r = n-1$, then $B_{nj} = 0$ for all $j \le n-1$ because of (i) above, which gives the claimed
result. Hence assume that $r < n-1$.
Since $b_n = r+1$, it follows from (ii) of theorem \ref{main3} that $B_{(r+1)n} = B_{(r+2)n} = 
\cdots = B_{(n-1)n} = 0$. In particular $B_{(n-1)n} = 0$, and (iii) of theorem \ref{main3} subsequently 
gives $B_{n(n-1)} = 0$. By (i) of theorem \ref{main3}, every row of $B$ is increasing.
Hence every entry in the last $n - r$ rows of $B$ that has not been mentioned yet is zero as well. 

\end{enumerate} 
Take $C \in \Co_n$ such that $\rct(C)$ is the zero matrix. We must show that $C = \lambda I_n$ for some 
$\lambda \in K$. Take $\lambda \in K$ such that the lower right corner entry of $C' := C - \lambda I_n$ 
is zero. Notice that $\rct(C') = \rct(C)$. We must show that $C' = 0$.

So assume that $C' \ne 0$. Take $k \le n$ maximal such that $C'_{ik} \ne 0$ for some $i \le n$.
Then $C' \in \Co_k$ and the $i$-th coordinate of $C' e_k$ is nonzero, so $B_{ik} = 1$. 
On account of (i), we have $k \ge r+1$, and by the fact that $\rct(C')$ is the zero matrix,  
$i \ge r+1$ as well. By (ii), we even have $i = k = n$, so $C'_{nn} \ne 0$. Contradiction.
\end{proof}

\noindent
The following lemma is not very hard, but it is used several times.

\begin{lemma} \label{small}
For all $j$, we have
$$
b_j \ge \dim_K \Co_j e_j
$$
and equality holds, if and only if $B_{ij} e_i \in \Co_j e_j$ for all $i$,
if and only if $\Co_j e_j$ is the linear span of standard basis unit vectors.
\end{lemma}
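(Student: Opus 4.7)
The plan is to identify $V := \Co_j e_j$ as a subspace of $K^n$ and to read $b_j$ off as the size of the index set $I := \{i : B_{ij} = 1\}$. By the definition $B_{ij} = \dim_K e_i\tp \Co_j e_j$, the scalar $B_{ij}$ lies in $\{0,1\}$ (since $e_i\tp \Co_j e_j$ is a subspace of $K$), and $i \in I$ precisely when some $v \in V$ satisfies $v_i \ne 0$. In particular $|I| = b_j$.

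First I would observe that $V \subseteq W := \operatorname{span}\{e_i : i \in I\}$: indeed, for any $i \notin I$ every $v \in V$ has $v_i = 0$ by the very definition of $I$. This gives $\dim_K V \le \dim_K W = |I| = b_j$, proving the inequality.

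Next, for the equality clause, the inclusion $V \subseteq W$ together with $\dim_K V = \dim_K W$ forces $V = W$, and conversely $V = W$ trivially gives equal dimensions. I would then observe that $V = W$ is equivalent to the statement $e_i \in V$ for every $i \in I$, and this in turn amounts (using $B_{ij} \in \{0,1\}$) exactly to the condition $B_{ij} e_i \in V$ for all $i$, the condition being vacuous whenever $B_{ij} = 0$. This settles the first ``iff''.

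For the second ``iff'', $V = W$ immediately exhibits $V$ as the span of the standard basis vectors $\{e_i : i \in I\}$. Conversely, if $V = \operatorname{span}\{e_i : i \in I'\}$ for some index set $I'$, then the $i$-th coordinate projection of $V$ is nonzero precisely when $i \in I'$, so $I' = I$ and $V = W$. The whole argument is elementary coordinate-wise linear algebra, and I do not expect any real obstacle beyond carefully unpacking the three equivalent phrasings of $V = W$.
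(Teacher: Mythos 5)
Your proposal is correct and follows essentially the same route as the paper: both arguments hinge on the observation that $\Co_j e_j$ sits inside the span $U$ of the standard basis vectors $e_i$ with $B_{ij} = 1$ (your $W$), that $\dim_K U = b_j$ gives the inequality, and that all three equivalent conditions amount to $\Co_j e_j = U$. The only cosmetic difference is that you pivot all three equivalences through the single statement $V = W$, whereas the paper verifies the chain of implications in a slightly different order.
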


\begin{proof}
Notice that $\Co_j e_j$ is the linear span of standard basis unit vectors, if and only if for all $i$
such that $\Co_j e_j$ is nontrivial at the $i$-th coordinate, we have $e_i \in \Co_j e_j$.
This is equivalent to that $B_{ij} e_i \in \Co_j e_j$ for all $i$.

Let $U$ be the linear span of the standard basis unit vectors $e_i$ for which 
$e_i \Co_j e_j \ne \{0\}$.
Then $U$ is a space of dimension $b_j$ which contains $\Co_j e_j$. So $b_j \ge \dim_K \Co_j e_j$, and if 
$b_j = \dim_K \Co_j e_j$, then $\Co_j e_j = U$ is the linear span of standard basis unit vectors.

If $b_j > \dim_K \Co_j e_j$, then there must be a standard basis unit vector of $U$ that is not contained 
in $\Co_j e_j$, while the corresponding coordinate projection of $\Co_j e_j$ is nontrivial. So 
$\Co_j e_j$ is not the linear span of standard basis unit vectors if $b_j > \dim_K \Co_j e_j$.
\end{proof}

\noindent
In order to prove theorem \ref{main3}, we will use the following lemma. The assertion that
$B_{ij} = 0$ implies $B_{i(j-1)} = 0$ can be found in the conclusion of (ii). 
Taking $k = n-1$ in the conclusion of (iii) 
gives $B_{(n-1)n} \ge B_{n(n-1)}$, which is another assertion of theorem \ref{main3}.

\begin{lemma} \label{zeroL}
Suppose that $\dim_K \Co_j e_j \ge \dim_{K(x_j)} \big((K(x_j) \otimes_K \Co_j) \cdot (e_k + x_j e_j)
\big)$. Then we have the following.
\begin{enumerate}[{\upshape (i)}]

\item $\dim_K \Co_j e_j = \dim_{K(x_j)} \big((K(x_j) \otimes_K \Co_j) \cdot  (e_k + x_j e_j)\big)$.

\item If $B_{ij} = 0$ for some $i$, then $e_i\tp \Co_{j-1} e_k = \{0\}$ as well. 

In particular, $B_{ij} = 0$ implies $B_{i(j-1)} = 0$ if $k = j-1$.

\item If $B_{ij} = 0$ for some $i$ and there exists a $C' \in \Co_j$ such 
that $e_i\tp C' e_k \ne 0$, then $C' e_j \notin \Co_{j-1} e_k$. 

In particular, we have $B_{kn} \ge B_{nk}$ if $j = n$, $I_n \in \Co_n$ and 
$b_k = \dim_K \Co_k e_k$.

\end{enumerate}
\end{lemma}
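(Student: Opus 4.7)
The plan is to handle all three parts by one common polynomial-degree argument in $x_j$, applied to basis expansions inside $W_{x_j} := (K(x_j) \otimes_K \Co_j) \cdot (e_k + x_j e_j)$.

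Choose a $K$-basis $D_1, \ldots, D_r$ of $\Co_{j-1}$ and extend to a $K$-basis $D_1, \ldots, D_s$ of $\Co_j$ such that $D_{r+1} e_j, \ldots, D_s e_j$ form a $K$-basis of $\Co_j e_j$. Writing $\phi(C) := C e_k + x_j C e_j$, I first show that $\phi(D_{r+1}), \ldots, \phi(D_s)$ are $K(x_j)$-linearly independent: any nontrivial $K(x_j)$-relation, after clearing denominators, would have a top power of $x_j$ coming from $x_j \sum p_\beta D_\beta e_j$, whose vanishing contradicts the $K$-linear independence of $\{D_\beta e_j\}_{\beta > r}$. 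Hence $\dim_{K(x_j)} W_{x_j} \ge s - r = \dim_K \Co_j e_j$, which together with the hypothesis proves part i) and identifies $\{\phi(D_\beta)\}_{\beta > r}$ as a $K(x_j)$-basis of $W_{x_j}$.

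For part ii) I assume $B_{ij} = 0$, so that $e_i\tp D_\beta e_j = 0$ for every $\beta$. Each $\phi(D_\alpha) = D_\alpha e_k$ (with $\alpha \le r$) expands as a $K(x_j)$-combination of the basis, and after clearing denominators I obtain
\[
q(x_j)\, D_\alpha e_k = \sum_{\beta > r} p_{\alpha\beta}(x_j)\, D_\beta e_k + x_j \sum_{\beta > r} p_{\alpha\beta}(x_j)\, D_\beta e_j
\]
in $K[x_j]^n$ with $q \ne 0$. If every $p_{\alpha\beta}$ vanishes then $D_\alpha e_k = 0$; otherwise the linear independence of $\{D_\beta e_j\}_{\beta > r}$ pins the degree of the right-hand side to $\max_\beta \deg p_{\alpha\beta} + 1$, forcing $\deg q = \max_\beta \deg p_{\alpha\beta} + 1$ and yielding a leading-coefficient identity $q^{\mathrm{lead}} D_\alpha e_k = \sum_\beta p_{\alpha\beta}^{\mathrm{lead}} D_\beta e_j$. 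Contracting with $e_i\tp$ and invoking $B_{ij} = 0$ annihilates the right-hand side, so $e_i\tp D_\alpha e_k = 0$ for each $\alpha \le r$; since $\{D_\alpha\}_{\alpha \le r}$ spans $\Co_{j-1}$, this is $e_i\tp \Co_{j-1} e_k = 0$, and taking $k = j-1$ gives the in-particular statement $B_{i(j-1)} = 0$.

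For part iii) I argue by contradiction. If $C' e_j = D e_k$ for some $D \in \Co_{j-1}$, then using $D e_j = 0$ I compute
\[
\phi(C' - x_j D) = C' e_k + x_j C' e_j - x_j D e_k - x_j^2 D e_j = C' e_k,
\]
so $C' e_k \in W_{x_j}$ admits an expansion $q(x_j)\, C' e_k = \sum_{\beta > r} p_\beta(x_j)(D_\beta e_k + x_j D_\beta e_j)$. The identical leading-coefficient argument from ii) forces $e_i\tp C' e_k = 0$, contradicting the hypothesis; hence $C' e_j \notin \Co_{j-1} e_k$. The in-particular $B_{kn} \ge B_{nk}$ follows by assuming $B_{nk} = 1$: since $b_k = \dim_K \Co_k e_k$, lemma \ref{small} yields $C'' \in \Co_k$ with $C'' e_k = e_n$, so $e_n \in \Co_k e_k \subseteq \Co_{n-1} e_k$; if $B_{kn} = 0$ held, applying iii) to $C' = I_n \in \Co_n$ (where $e_k\tp I_n e_k = 1 \ne 0$) would give $I_n e_n = e_n \notin \Co_{n-1} e_k$, a contradiction. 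The main obstacle is the polynomial-degree bookkeeping: restricting the basis expansion of $W_{x_j}$ to indices $\beta > r$ is what leaves only the vectors $D_\beta e_j$ that are genuinely $K$-linearly independent, so the top power of $x_j$ on the right is carried by a nonzero $\Co_j e_j$-valued coefficient that $B_{ij} = 0$ then annihilates under $e_i\tp$.
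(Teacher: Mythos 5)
Your proposal is correct and follows essentially the same route as the paper: a degree-in-$x_j$ argument exploiting that only the $\Co_j e_j$-components carry the top power of $x_j$, the same auxiliary element for iii) (your $C'-x_jD$ is the paper's $x_jC_{d+1}-C_d$ up to sign), and the same use of Lemma \ref{small} for the in-particular claims. The only difference is cosmetic: the paper contradicts the dimension hypothesis by exhibiting a nonvanishing $(d+1)$-minor with a constant extra row and column, whereas you use the hypothesis to get a $K(x_j)$-basis and then compare leading coefficients after contracting with $e_i\tp$.
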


\begin{listproof}
\begin{enumerate}[(i)]

\item Let  $d := \dim_K \Co_j e_j$. Then we can find $C_1, C_2, \ldots, C_d \in \Co_j$
such that $\Co_j e_j = K C_1 e_j \oplus K C_2 e_j \oplus \cdots \oplus K C_d e_j$. Hence the 
$n \times d$ matrix
$$
\Big( C_1 x_j e_j \Big| C_2 x_j e_j \Big| \cdots \Big| C_d x_j e_j \Big)
$$
has a minor of size $d$ which has degree $d$. 
The corresponding minor of the $n \times d$ matrix
$$
\Big( C_1 (e_k + x_j e_j) \Big| C_2 (e_k + x_j e_j) \Big| \cdots \Big| C_d (e_k + x_j e_j) \Big)
$$ 
has degree $d$ as well,
so $\dim_K \Co_j e_j \le \dim_{K(x_j)} \Co_j (e_k + x_j e_j)$, and (i) follows by assumption.

\item By taking $k = j - 1$, the last claim follows from the first claim. To prove the first
claim, suppose that $i \le n$ and that there exists a $C_{d+1} \in \Co_{j-1}$ such that 
$e_i\tp C_{d+1} e_k \ne 0$. Then $C_{d+1} e_j = 0$, so we have 
$C_{d+1} (e_k + x_j e_j) \in K^{\times n}$ and $e_i\tp C_{d+1} (e_k + x_j e_j) \in K^{*}$.
Suppose additionally that $B_{ij} = 0$. Then $e_i\tp \Co_j e_j = \{0\}$, so the $i$-th
rows of the matrices of size $n \times d$ in the proof of (i) are constant. It follows 
that the minors of these matrices in the proof of (i) do not use row $i$.

By expansion along the $i$-th row or the $(d+1)$-th column, which are both constant, 
we see that the  $n \times (d+1)$ matrix
$$
\Big( C_1 (e_k + x_j e_j) \Big| C_2 (e_k + x_j e_j) \Big| \cdots \Big| C_d (e_k + x_j e_j)
\Big| C_{d+1} (e_k + x_j e_j) \Big)
$$ 
has a minor of size $d + 1$ which has degree $d$, 
namely the minor of size $d$ in the proof of (i), extended with 
row $i$ and column $d + 1$. This contradicts $\dim_K \Co_j e_j \ge \dim_{K(x_j)} 
\big((K(x_j) \otimes_K \Co_j) \cdot (e_k + x_j e_j) \big)$.

\item We first show that the first claim implies the last claim. 
Take $i = k$, $j = n$ and $C' = I_n$ in the first claim.
Assuming the first claim, we see that $B_{kn} = 0$ and $I_n \in \Co_n$
together imply $e_n = I_n e_n \notin \Co_{n-1} e_k$. Now suppose that 
$B_{kn} < B_{nk}$ and $I_n \in \Co_n$. Then $k \le n-1$ and $B_{nk} = 1$, so that
$B_{nk} e_n = e_n \notin \Co_{n-1} e_k \supseteq \Co_k e_k$. From lemma \ref{small},
we deduce that $b_k > \dim_K \Co_k e_k$. This gives the last claim.

To prove the first claim, suppose that $B_{ij} = 0$ and there exists a $C' \in \Co_j$ 
such that $e_i\tp C' e_k \ne 0$. By (ii), we have $C' \notin \Co_{j-1}$, thus we may assume that $C_d = C'$ 
in the proof of (i). Just as in the proof of (ii), we can see that the minors in the proof of (i) 
does not use row $i$, because that row is constant with respect to $x_j$.

Suppose additionally that $C' e_j \in \Co_{j-1} e_k$. 
Then there exists a $C_{d+1} \in \Co_{j-1}$ such that $C_{d+1} e_k = C' e_j = C_d e_j$. 
Since $x_j C_{d+1} e_k = x_j C_d e_j$ and $x_j^2 C_{d+1} e_j \in x_j^2 \Co_{j-1} e_j = 0$, 
it follows that
\begin{align*} 
(x_j C_{d+1} - C_d) (e_k + x_j e_j) &= -C_d e_k \in K^{\times n} \\
\intertext{and} 
e_i\tp (x_j C_{d+1} - C_d) (e_k + x_j e_j) &= -e_i\tp C_d e_k = -e_i\tp C' e_k \in K^{*}
\end{align*}

By expansion along the $i$-th row or the $(d+1)$-th column, which are both constant, 
we see that the $n \times (d+1)$ matrix
$$\
\Big( C_1 (e_k + x_j e_j) \Big| C_2 (e_k + x_j e_j) \Big| \cdots \Big| C_d (e_k + x_j e_j)
\Big| (x_j C_{d+1} - C_d) (e_k + x_j e_j) \Big)
$$ 
has a minor of size $d + 1$ which has degree $d$, 
namely the minor of size $d$ in the proof of (i), 
extended with row $i$ and column $d + 1$. 
This contradicts $\dim_K \Co_j e_j \ge \dim_{K(x_j)} 
\big((K(x_j) \otimes_K \Co_j) \cdot (e_k + x_j e_j) \big)$.
\qedhere

\end{enumerate}
\end{listproof}

\section{Proof of theorem \ref{main3}}

The following two lemmas are not really necessary for the proof if the base field $K$ is infinite.

\begin{lemma} \label{vanish}
Let $K$ be a field and $f \in K[x] = K[x_1,x_2,\ldots,x_n]$ such that $\deg f \le d$. Suppose that
$S \subseteq K$ such that $f$ vanishes on $S^{\times n}$. Then $f = 0$ in the following cases.
\begin{enumerate}[{\upshape (i)}]

\item $\#S > d$,

\item $f$ is homogeneous, $0 \in S$ and $\#S \ge \max\{d,2\}$.

\end{enumerate}
\end{lemma}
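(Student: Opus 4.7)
The plan is to prove both parts by induction on the number $n$ of variables, with the degree $d$ playing an auxiliary role. In each case the inductive step reduces the multivariate vanishing to a univariate vanishing by writing $f = \sum_{i} g_i(x_1,\ldots,x_{n-1}) x_n^i$ and specializing the first $n-1$ coordinates.

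For part i), the base case $n = 1$ is the standard fact that a univariate polynomial of degree at most $d$ with more than $d$ roots is zero. For the inductive step, I would write $f = \sum_{i=0}^{d} g_i(x_1,\ldots,x_{n-1}) x_n^i$ with $\deg g_i \le d - i \le d$. For any fixed $(a_1,\ldots,a_{n-1}) \in S^{\times(n-1)}$, the univariate polynomial $f(a_1,\ldots,a_{n-1},x_n)$ has degree at most $d < \#S$ and vanishes on $S$, hence is identically zero. Thus each $g_i$ vanishes on $S^{\times(n-1)}$, and since $\#S > d \ge \deg g_i$, the induction hypothesis gives $g_i = 0$, so $f = 0$.

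For part ii), the delicate point is that $\#S$ might only equal $d$, so the univariate argument above no longer applies verbatim. If the homogeneous degree $e$ of $f$ is strictly less than $d$, then part i) already gives $f = 0$ because $\#S \ge d > e$, so I may assume $e = d$. The base case $n = 1$ is immediate: $f = c x_1^d$, and $c = 0$ follows either by evaluating at $0 \in S$ (when $d = 0$) or at any nonzero element of $S$ (which exists because $\#S \ge 2$) when $d \ge 1$. For $n \ge 2$, expand $f = \sum_{i=0}^{d} g_i(x_1,\ldots,x_{n-1}) x_n^i$ with $g_i$ homogeneous of degree $d - i$; in particular $g_d$ is a constant, call it $c$. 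Specializing at $x_1 = \cdots = x_{n-1} = 0$ (which is legal since $0 \in S$) kills all $g_i$ with $i < d$ by homogeneity, leaving $f(0,\ldots,0,x_n) = c x_n^d$; vanishing on $S$ combined with the existence of a nonzero element of $S$ forces $c = 0$, i.e., $g_d = 0$. With the top coefficient gone, for every $(a_1,\ldots,a_{n-1}) \in S^{\times(n-1)}$ the polynomial $f(a_1,\ldots,a_{n-1},x_n)$ now has degree strictly less than $d \le \#S$ and vanishes on $S$, hence is zero, so each $g_i$ with $i < d$ vanishes on $S^{\times(n-1)}$. Since such $g_i$ is homogeneous of degree $d - i \ge 1$ and $\#S \ge \max\{d,2\} \ge \max\{d-i, 2\}$, the induction hypothesis applies and concludes.

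The main obstacle is exactly this reduction of the top-degree coefficient in part ii): the one-variable count $\#S \ge d$ is not by itself strong enough to force vanishing (the polynomial $\prod_{s \in S}(x-s)$ is a counterexample), so homogeneity together with $0 \in S$ must be used synergetically to kill the constant $g_d$ first, thereby lowering the effective $x_n$-degree strictly below $\#S$ and placing us back in the regime of part i) inside the induction.
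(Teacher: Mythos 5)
Your proof is correct, and it takes a genuinely different route from the paper's. For part i), you run a single induction on the number of variables $n$: expand $f$ in powers of $x_n$, apply the univariate case to each slice $f(a_1,\ldots,a_{n-1},x_n)$ to conclude that every coefficient $g_i$ vanishes on $S^{\times(n-1)}$, then apply the induction hypothesis to each $g_i$. The paper instead normalizes so that $0\in S$, peels off the single root $x_n=0$ via the decomposition $f = f(x_1,\ldots,x_{n-1},0) + x_n g$, and runs a double induction on $n$ and on $d$, using $\#\tilde S > d-1 \ge \deg g$ with $\tilde S := S\setminus\{0\}$. For part ii), your key step is to note that the top $x_n$-coefficient $g_d$ of a homogeneous degree-$d$ polynomial is a scalar, and to kill it by evaluating at $(0,\ldots,0,s)$ for a nonzero $s\in S$; once $g_d$ is gone, each slice in $x_n$ has degree strictly below $d\le\#S$ and the remaining coefficients $g_i$ are handled by the induction hypothesis (or directly by part i) when $i\ge 1$). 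The paper's argument for ii) is more intricate: it first disposes of $x_n^2\mid f$ and $\deg f\le 1$, and otherwise performs a two-level peel (at $x_n=0$, then at $x_{n-1}=0$) to land on a polynomial $h$ of degree at most $d-2$ vanishing on $\tilde S^{\times n}$, which part i) finishes since $\#\tilde S\ge d-1>\deg h$. Both proofs are sound; yours is arguably more transparent and pinpoints exactly where the hypotheses $0\in S$ and $\#S\ge 2$ are used (killing the pure $x_n^d$ term), whereas the paper's version stays entirely within the root-peeling framework it sets up for part i).
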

 
\begin{proof}
By replacing $f(x)$ by $f(x-s)$ for some $s \in S$, we may assume that $0 \in S$ in (i) as well. 
Let $\tilde{S} = S \setminus \{0\}$.
\begin{enumerate}[(i)]

\item We can write
$$
f(x) = f(x_1,x_2,\ldots,x_{n-1},0) + x_n g(x)
$$
Notice that $f(x)$ and hence also $f(x_1,x_2,\ldots,x_{n-1},0)$ vanishes at 
$S^{\times (n-1)} \times \{0\}$. 
By induction on $n$, we deduce that $f(x_1,x_2,\ldots,x_{n-1},0) = 0$, so $x_n g$ vanishes at 
$S^{\times n}$. 
Since $x_n$ does not vanish anywhere at $\tilde{S}^{\times n}$, we conclude that $g$ vanishes at 
$\tilde{S}^{\times n}$. By induction on $d$, $g = 0$, so $f = 0$ as well.

\item If $x_n^2 \mid f$, then we can apply (i) on $x_n^{-1} f$ instead of $f$, to obtain $f = 0$. 
The case $\deg f \le 1$ follows from (i) as well. So assume that $\deg f \ge 2$ and $x_n^2 \nmid f$.
Then $n \ge 2$. Take $g(x)$ as in (i). Just as in (i), $f(x_1,x_2,\ldots,x_{n-1},0) = 0$ follows by induction
and $x_n g$ vanishes at $S^{\times n}$. Write
$$
x_n g(x) = x_n g(x_1,x_2\ldots,x_{n-2},0,x_n) + x_{n-1} x_n h(x)
$$
Notice that $x_n g(x)$ and hence also $x_n g(x_1,x_2,\ldots,x_{n-2},0,x_n)$ vanishes at 
$S^{\times (n-2)} \times \{0\} \times S$.
By induction on the number of variables, we deduce that $x_n g(x_1,x_2,\ldots,x_{n-2},0,x_n) = 0$. 
Hence $x_{n-1} x_n h(x)$ vanishes at $S^{\times n}$. Since $x_{n-1} x_n$ does not vanish anywhere at 
$\tilde{S}^{\times n}$, we conclude that $h$ vanishes at $\tilde{S}^{\times n}$. 
On account of $\#\tilde{S} \ge d-1 > d-2 \ge \deg h$, $h = 0$ follows from (i).
So $f = 0$ once again. \qedhere

\end{enumerate}
\end{proof}

\noindent
Suppose that $K$ has a $(q-1)$-th root of unity, e.g.\@ $K = \F_q$.
The polynomials $x_1^{q-1} - 1$ and $x_1^q - x_1$ show that $\#S > d$ 
is necessary in (i). The polynomials $x_1^{q-1} - x_2^{q-1}$ and $x_1^qx_2 - x_1x_2^q$ 
show that $0 \in S$ and $\#S \ge d$ respectively are necessary in (ii).

Notice that $1\frac12$ lies between the degrees of the leading and the trailing term of
$x_1^q - x_1$. Since $\deg (x_1^q - x_1) \ge \#\F_q$, this is no coincidence, because 
the homogeneity condition in (ii) can be replaced by that $1\frac12$ is not contained in the interval 
that envelops the term degrees (and the proof of (ii) still applies).

\begin{lemma} \label{dimL}
Let $L/K$ be a field extension (possibly trivial) and let $\Ve$ be a subspace of $\Mat_{m,n}(K)$.
Define
\begin{align*}
d &:= \dim_{K(x)} \big((K(x) \otimes_K \Ve) \cdot x\big) \\
  &\hphantom{:}= \dim_{K(x)} \sum_{V \in \Ve} K(x) \cdot V \cdot x
\end{align*}
Then we have the following.
\begin{enumerate}[{\upshape (i)}]

\item For all $v \in L^{\times n}$, we have $\dim_L\big((L \otimes_K \Ve) \cdot v\big) \le d$.

\item If $\# L \ge d$, then there exists a vector $v \in L^{\times n}$ such that 
$\dim_L\big((L \otimes_K \Ve) \cdot v\big) = d$.

\item If $\# L > d$, then for each $k \le n$, there exists a vector $v \in L^{\times n}$ such that 
$\dim_L\big((L \otimes_K \Ve) \cdot v\big) = d$ and $v_k = 1$.

\end{enumerate}
\end{lemma}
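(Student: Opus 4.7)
The plan is to reduce all three statements to a rank computation for the matrix whose columns are $C_1 y, C_2 y, \ldots, C_s y$, where $C_1, \ldots, C_s$ is a $K$-basis of $\Ve$ (hence also an $L$-basis of $L \otimes_K \Ve$). For any vector $w$ in any extension field of $K$, the subspace $(L \otimes_K \Ve) w$ is spanned by $C_1 w, \ldots, C_s w$, so its dimension equals the rank of the matrix $M(w) := [\,C_1 w \mid C_2 w \mid \cdots \mid C_s w\,]$. Since every entry of $M(y)$ is a linear form in $y$, every $r \times r$ minor of $M(y)$ is a homogeneous polynomial of degree $r$ in $K[y_1, \ldots, y_n]$. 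By construction, $d$ is exactly the largest $r$ for which some $r \times r$ minor of $M(x)$ is a nonzero polynomial.

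For (i), every $(d+1) \times (d+1)$ minor of $M(x)$ vanishes identically in $K[x]$, hence also in $L[x]$, so it vanishes after substituting $x \mapsto v$ for any $v \in L^n$. Therefore $\mathrm{rank}_L M(v) \le d$, giving the desired bound.

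For (ii), fix a $d \times d$ minor $f(x)$ of $M(x)$ that is nonzero; this $f$ is homogeneous of degree $d$. If $d = 0$ the statement is trivial, so assume $d \ge 1$; then $\#L \ge \max\{d,2\}$ automatically, since any field has at least two elements and by hypothesis $\#L \ge d$. Applying lemma \ref{vanish}~ii) with $S = L$ (which contains $0$) to $f$ yields a $v \in L^n$ with $f(v) \ne 0$, whence $\mathrm{rank}_L M(v) \ge d$; combined with (i) this forces equality. For (iii), I would dehomogenize by setting the $k$-th coordinate equal to $1$: the polynomial $\tilde f := f(x_1,\ldots,x_{k-1},1,x_{k+1},\ldots,x_n)$ is still nonzero (a homogeneous form of degree $d$ is determined by its specialization at $x_k = 1$, since the exponent of $x_k$ is recovered from the total degree) and has degree at most $d$. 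Now $\#L > d$ lets me apply lemma \ref{vanish}~i) in $n-1$ variables to produce a tuple at which $\tilde f$ does not vanish; inserting a $1$ in the $k$-th slot gives the required $v$ with $v_k = 1$ and $f(v) \ne 0$, so once again $\mathrm{rank}_L M(v) = d$ by (i).

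The only genuinely subtle step is checking that dehomogenizing the chosen minor at $x_k = 1$ really produces a nonzero polynomial of degree at most $d$; this is what lets us transfer from the homogeneous case of lemma \ref{vanish}~ii) used in (ii) to the inhomogeneous case of lemma \ref{vanish}~i) needed for (iii). Everything else is bookkeeping about minors and ranks, and the small-$d$ boundary cases (in particular $d = 0$ and $d = 1$) are handled automatically because every field has at least two elements, so the hypotheses of lemma \ref{vanish}~ii) are met whenever the claim to prove is nontrivial.
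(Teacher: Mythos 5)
Your proof is correct and follows essentially the same route as the paper: choose a $K$-basis of $\Ve$, identify $d$ with the rank over $K(x)$ of the matrix of columns $C_i x$, get i) from the vanishing of all $(d+1)$-minors, and get ii) by applying lemma \ref{vanish}~ii) to a nonzero homogeneous $d\times d$ minor. The only (harmless) divergence is in iii): the paper applies lemma \ref{vanish}~ii) to $x_k h(x)$ and then rescales $v$ by $v_k$ using homogeneity, whereas you dehomogenize the minor at $x_k=1$ and invoke lemma \ref{vanish}~i) in $n-1$ variables; both variants use $\#L>d$ in the same way, and your observation that dehomogenization of a nonzero degree-$d$ form at $x_k=1$ stays nonzero is valid.
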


\begin{proof}
Let $D := \dim_K \Ve$ and take a basis $V_1, V_2, \ldots, V_D$ of $\Ve$. Since 
$D = \dim_{K(x)} (K(x) \otimes_K \Ve)$, $V_1, V_2, \ldots, V_D$ is also a basis of $K(x) \otimes_K \Ve$.
Hence $V_1 x, \allowbreak V_2 x, \allowbreak\ldots, V_D x$ is a spanning set of $(K(x) \otimes_K \Ve) \cdot x$. 
After an appropriate renumbering of the $V_i$'s, we have that $V_1 x, V_2 x, \allowbreak \ldots, V_d x$
is a basis of $(K(x) \otimes_K \Ve) \cdot x$. 
\begin{enumerate}[(i)]

\item
Take any $v \in L^{\times n}$. Notice that $V_1, V_2, \ldots, V_D$ is also a basis of $K(v) \otimes_K \Ve$.
Hence $V_1 v, V_2 v, \ldots, V_D v$ is a spanning set of $(K(v) \otimes_K \Ve) \cdot v$. 
Suppose that we have a subset $\{W_1, W_2, \ldots, W_{d+1}\}$ of $\{V_1, V_2, \ldots, V_D\}$
such that $W_1 v, W_2 v, \ldots, W_{d+1} v$ are independent over $K(v)$. Then the matrix with 
columns $W_1 v, W_2 v, \ldots, W_{d+1} v$ has a minor of size $d+1$ that does not vanish.
The corresponding minor of the matrix with columns $W_1 x, W_2 x, \ldots, W_{d+1} x$ 
does not vanish either, so $W_1 x, W_2 x, \ldots, W_{d+1} x$ are independent over $K(x)$.
This contradicts the definition of $d$, so if we reduce $V_1 v, V_2 v, \ldots, V_D v$ to a basis, we 
get $d' \le d$ vectors $W_1 v, W_2 v, \ldots, W_{d'} v$.

\item
If $d = 0$, then we can take $v$ arbitrary on account of (i), so assume that $d \ge 1$.
The matrix with columns $V_1 x, V_2 x, \ldots, V_d x$ contains a minor $h(x) \ne 0$ 
of size $d$ which has degree $d$ and is homogeneous. 
Suppose that $\# L \ge d$. By (ii) of lemma \ref{vanish}, there exists a 
vector $v \in L^{\times n}$ such that $h(x)$ does not vanish at $v$. Hence the matrix with columns
$V_1 v, V_2 v, \ldots, V_d v$ has a minor of size $d$ that does not vanish either.
This gives (ii).

\item
If $d = 0$, then we can take $v = (1,1\ldots,1)$ on account of (i), so assume that $d \ge 1$.
Suppose that $\# L > d$ and take any $k \le n$. Take $h(x)$ as in the proof of (ii).
By (ii) of lemma \ref{vanish}, there exists a vector $v \in L^{\times n}$ such that $x_k h(x)$ does not 
vanish at $v$. Hence we can deduce the conclusion of (ii) once again. Since we have $v_k \ne 0$
in addition, we can obtain $v_k = 1$ by dividing $v$ by $v_k$, because $h$ is homogeneous.
\qedhere

\end{enumerate}
\end{proof}

\noindent
From now on in this section, we assume that $\Co_n$ is a subspace of $\Mat_n(K)$, and define
$$
\Co_k := \{ C \in \Co_n \mid C e_{k+1} = C e_{k+2} = \cdots = C e_n = 0 \}
$$
for all $k < n$, where $e_i$ is the $i$-th standard basis unit vector. 

Define
\begin{align*}
d_k &:= \dim_{K(x)} \big((K(x) \otimes_K \Co_k) \cdot x \big) \\ 
 &\hphantom{:}= \dim_{K(x)} \sum_{C \in \Co_k} K(x) \cdot C \cdot x
\end{align*}
Notice that $0 = d_0 \le d_1 \le d_2 \le \cdots \le d_n = r+1$, where $r+1$ is as in theorems 
\ref{main2} and \ref{main3}. 

Lemma \ref{dimL} leads to the following corollary.

\begin{corollary} \label{vcol}
If $\#K \ge d_k$, then there exists a $v \in K^{\times n}$ with $v_{k+1} = v_{k+2} = \cdots = v_n = 0$,
such that $d_k = \dim_K \Co_k v$. If $\#K > d_k$, then we can additionally take $v_k = 1$.
\end{corollary}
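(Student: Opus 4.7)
The plan is to reduce this corollary to a direct application of ii) and iii) of Lemma \ref{dimL}. The key observation is that every $C \in \Co_k$ satisfies $Ce_{k+1} = \cdots = Ce_n = 0$, so the last $n-k$ columns of any such $C$ vanish. Truncating each matrix to its first $k$ columns identifies $\Co_k$ with a subspace $\Ve \subseteq \Mat_{n,k}(K)$, and under this identification $Cv = C'v'$ whenever $v = (v_1,\ldots,v_k,0,\ldots,0) \in K^n$ corresponds to $v' = (v_1,\ldots,v_k) \in K^k$ (with $C' \in \Ve$ the truncation of $C$).

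Applying this observation with the vector of indeterminates in particular, $Cx$ is independent of $x_{k+1},\ldots,x_n$ for every $C \in \Co_k$. So setting $x' = (x_1,\ldots,x_k)$, we get
$$
d_k = \dim_{K(x)} \sum_{C \in \Co_k} K(x)\cdot Cx = \dim_{K(x')} \sum_{C' \in \Ve} K(x')\cdot C'x',
$$
which is precisely the quantity denoted $d$ in Lemma \ref{dimL} applied to the space $\Ve \subseteq \Mat_{n,k}(K)$ (that is, with the ``$n$'' of that lemma taken to be $k$).

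Now I would apply Lemma \ref{dimL} with $L = K$. If $\#K \ge d_k$, then ii) of that lemma furnishes a $v' \in K^k$ with $\dim_K(\Ve v') = d_k$; extending by zeros to $v := (v',0,\ldots,0) \in K^n$ yields $\Co_k v = \Ve v'$, hence $\dim_K \Co_k v = d_k$, while $v_{k+1}=\cdots=v_n = 0$ holds by construction. If $\#K > d_k$, then iii) of Lemma \ref{dimL}, applied with the distinguished coordinate index equal to $k$ (which is permissible since $k \le k$), additionally allows us to require $v'_k = 1$, and hence $v_k = 1$ after the extension. There is no real obstacle here beyond the bookkeeping of matching $d_k$ with the $d$ of Lemma \ref{dimL}, which the identification of $\Co_k$ with $\Ve$ makes transparent.
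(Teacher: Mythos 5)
Your proof is correct and uses essentially the same ingredients as the paper's: both invoke parts ii) and iii) of Lemma \ref{dimL} together with the observation that the last $n-k$ columns of $\Co_k$ vanish. The paper simply reverses the order of operations — it applies Lemma \ref{dimL} directly to $\Co_k \subseteq \Mat_n(K)$ to get $v \in K^n$ and then zeroes out coordinates $k+1,\ldots,n$ of $v$ after the fact (which leaves $\Co_k v$ unchanged), avoiding your truncation to $\Mat_{n,k}(K)$ and the attendant need to check that passing from $K(x)$ to $K(x')$ preserves the dimension $d_k$.
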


\begin{proof}
The existence of a vector $v$ as claimed, except that $v_{k+1} = v_{k+2} = \cdots = 
v_n = 0$, follows from (ii) and (iii) of lemma \ref{dimL} respectively.
Since columns $k+1, k+2, \ldots, n$ of $\Co_k$ are zero, we can indeed take $v_{k+1} = v_{k+2} = \cdots 
= v_n = 0$. 
\end{proof}

\noindent
Unlike $d_n = r+1$, $d_k$ is not invariant under linear conjugation in general. But $d_k$ is indeed 
invariant under conjugation with lower triangular linear maps for every $k$, because of (i) of the 
following lemma.

\begin{lemma} \label{Tlem}
Suppose that the last $n-k$ columns of $T \in \GL_n(K)$ match those of a lower triangular matrix.
Then we have the following changes when we replace $\Co_n$ by $T^{-1} \Co_n T$.
\begin{enumerate}[{\upshape (i)}]
 
\item
$\Co_k e_k$ gets replaced by $T^{-1} \Co_k T e_k$ and $d_k$ stays the same.

\item
If the $k$-th column $T e_k$ of $T$ is zero above the diagonal, then $\Co_k e_k$ gets replaced by 
$T^{-1} \Co_k e_k$ and $\dim_K \Co_k e_k$ stays the same.

\end{enumerate}
Furthermore, we have the following for all $j > k$ when we replace $\Co_n$ by $T^{-1} \Co_n T$.
\begin{enumerate}[{\upshape (i)}] \addtocounter{enumi}{2}

\item
$\Co_j e_j$ gets replaced by $T^{-1} \Co_j e_j$ and $d_j$ and $\dim_K \Co_j e_j$ stay the same.

\item
If $B_{ij} = 1$ implies $T e_i = e_i$ for every $i$, then $B_{ij}$ will not change for any $i$.

\item
If $B_{ij} = 0$ implies $e_i\tp T = e_i\tp$ for each $i$ and $b_j = \dim_K \Co_j e_j$, then $B_{ij}$ 
will not change for any $i$.

\end{enumerate}
\end{lemma}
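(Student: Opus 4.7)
The plan is to dispatch parts (i) and (ii) directly from the block structure of $T$, derive (iii) by reapplying them at index $j > k$, and then obtain (iv) and (v) by analysing how $T^{\pm1}$ acts on the coordinate subspace $V := \mathrm{span}\{e_i : B_{ij}=1\}$, which contains $\Co_j e_j$ by definition of $B$.

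For (i), the hypothesis that columns $k+1,\ldots,n$ of $T$ are lower triangular forces $T$ to have the block shape $\bigl(\begin{smallmatrix}A & 0 \\ * & L\end{smallmatrix}\bigr)$, where $L$ is $(n-k)\times(n-k)$ lower triangular with nonzero diagonal (by invertibility of $T$). Thus $Te_{k+1},\ldots,Te_n$ span the same subspace as $e_{k+1},\ldots,e_n$, so the conditions ``$CTe_l = 0$ for all $l > k$'' and ``$Ce_l = 0$ for all $l > k$'' are equivalent. This identifies the new $\Co_k$ (defined intrinsically from $T^{-1}\Co_n T$ by the same filtration recipe) with $T^{-1}\Co_k T$. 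Invariance of $d_k$ then follows from $T^{-1}\Co_k T \cdot x = T^{-1}(\Co_k \cdot Tx)$ together with $K(x) = K(Tx)$, since the entries of $Tx$ form a transcendence basis of $K(x)$ over $K$ when $T \in \GL_n(K)$. For (ii), the extra hypothesis on column $k$ strengthens the picture: the trailing $(n-k+1)\times(n-k+1)$ subblock of $T$ is now itself lower triangular, with determinant $\prod_{l=k}^n T_{ll}$, so $T_{kk} \neq 0$ and $Te_k = T_{kk}e_k + \sum_{l > k} T_{lk} e_l$. Since $C \in \Co_k$ annihilates the later basis vectors, $CTe_k = T_{kk}Ce_k$, and new $\Co_k e_k = T^{-1}\Co_k T e_k = T^{-1}\Co_k e_k$ has unchanged $K$-dimension. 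Part (iii) is then immediate: for $j > k$ both hypotheses of (i) and (ii) hold with $j$ in place of $k$, because the last $n - j$ columns of $T$ are among the last $n - k$ lower triangular columns, and column $j$ itself is as well.

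Parts (iv) and (v) both begin from the identity ``new $\Co_j e_j = T^{-1}\Co_j e_j$'' supplied by (iii), and differ only in how $T^{\pm1}$ acts on $V$. In (iv), the hypothesis $Te_i = e_i$ for all $i \in I := \{i : B_{ij}=1\}$ says $T$ restricts to the identity on $V$; hence so does $T^{-1}$, and since $\Co_j e_j \subseteq V$ the new column space equals $T^{-1}\Co_j e_j = \Co_j e_j$, leaving $B_{ij}$ unchanged for every $i$. In (v), the hypothesis $e_i\tp T = e_i\tp$ for $i \notin I$ transports via $TT^{-1} = I_n$ to $e_i\tp T^{-1} = e_i\tp$ for the same $i$; thus $T^{-1}V \subseteq V$, and $T^{-1}V = V$ by invertibility. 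Combining this with Lemma \ref{small} and the assumption $b_j = \dim_K \Co_j e_j$, which together upgrade the inclusion $\Co_j e_j \subseteq V$ to the equality $\Co_j e_j = V$, we get new $\Co_j e_j = T^{-1}V = V$, so $B_{ij}$ is once again unchanged.

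The main obstacle I anticipate is the setup of (i): the equality ``new $\Co_k = T^{-1}\Co_k T$'' is the conceptual hinge of the whole lemma and is not a tautology, since the new $\Co_k$ is defined by the filtration recipe applied to $T^{-1}\Co_n T$ rather than by direct conjugation of the old $\Co_k$. The identification rests entirely on the lower triangular shape of the last $n-k$ columns of $T$ producing $\mathrm{span}(Te_{k+1},\ldots,Te_n) = \mathrm{span}(e_{k+1},\ldots,e_n)$; once this is in hand, parts (iii)--(v) follow by essentially formal linear algebra on $V$.
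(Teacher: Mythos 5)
Your proposal is correct and follows essentially the same route as the paper: the key identification of the new $\Co_k$ with $T^{-1}\Co_k T$ via the lower-triangular last columns, the cancellation $C\,Te_k = T_{kk}\,Ce_k$ for (ii), reapplication at index $j$ for (iii), and the use of $e_i\tp T = e_i\tp \Rightarrow e_i\tp T^{-1} = e_i\tp$ together with Lemma \ref{small} for (iv) and (v). The only (harmless) variation is in (v), where you show directly that $\Co_j e_j$ equals the coordinate subspace $V$ and is preserved, whereas the paper argues by contradiction that no entry of $Be_j$ can flip from $0$ to $1$; both rest on the same ingredients.
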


\begin{proof}
Since $T$ is lower triangular at the last $n-k$ columns, the last $n-k$ columns of $C \in \Mat_n(K)$ are 
zero, if and only if the last $n-k$ columns of $CT$ are zero, if and only if the last $n-k$ columns of
$T^{-1} C T$ are zero. Hence $\Co_k$ gets replaced by $T^{-1} \Co_k T$
when we replace $\Co_n$ by $T^{-1} \Co_n T$.
\begin{enumerate}[{\upshape (i)}]

\item
Since $\Co_k$ gets replaced by $T^{-1} \Co_k T$, the first claim is obvious. 
For a vector $v \in K(x)^{\times n}$, let $\phi(v) = T^{-1}v|_{x=Tx}$, where $|_{x=f(x)}$ means 
substituting $x$ by $f(x)$. Then $\phi^{-1}(v) = Tv|_{x=T^{-1}x}$, so $\phi$ is an isomorphism
between the spaces $(K(x) \otimes_K \Co_k) \cdot x$ and $T^{-1} \cdot (K(x) \otimes_K \Co_k) \cdot Tx$. 
In particular, the dimensions of these spaces are equal, which gives the second claim.

\item
Since $\Co_k e_k$ and $T^{-1} \Co_k e_k$ are isomorphic, the second claim follow from the first. 
Hence by (i), it suffices to show that $\Co_k T e_k = \Co_k e_k$.
For that purpose, assume that $T e_k$ is zero above the $k$-th coordinate.
Since $\Co_k$ in turn is zero at the right of the $k$-th column, only the $k$-th column of
$\Co_k$ and the $k$-th coordinate of $T e_k$ contribute to the product $\Co_k \cdot T e_k$, i.e.\@
$\Co_k \cdot T e_k = \Co_k e_k \cdot e_k\tp T e_k$.

The $k$-th coordinate $e_k\tp T e_k$ of $T e_k$ in nonzero, because $T \in \GL_n(K)$ is lower 
triangular at the last $n - k + 1$ columns. So we can cancel $e_k\tp T e_k$ to obtain
$\Co_k T e_k = \Co_k e_k$.

\item
Since $T$ is lower triangular at the last $n - j + 1$ columns, the desired results follow 
from (ii), (i) and (ii) respectively.

\item
Assume that $B_{ij} = 1$ implies $T e_i = e_i$ for all $i$.
We prove that $B_{ij}$ will not change for any $i$ by showing that $\Co_j e_j$ stays the same.
By (iii), $\Co_j e_j$ gets replaced by $T^{-1} \Co_j e_j$, so it suffices to show $(T^{-1} - I_n)
\Co_j e_j = 0$. If $B_{ij} = 0$, then the $i$-th coordinate of $\Co_j e_j$ is zero. 
If $B_{ij} = 1$, then the $i$-th column of $T^{-1} - I_n = T^{-1}(I_n - T)$ is zero by assumption. So 
$$
(T^{-1} - I_n) \Co_j e_j = (T^{-1} - I_n) I_n \Co_j e_j \subseteq 
\sum_{i=1}^n (T^{-1} - I_n)e_i \cdot e_i\tp  \Co_j e_j = 0
$$
indeed.

\item
Assume that $b_j = \dim_K \Co_j e_j$. By (iii), $\dim_K \Co_j e_j$ will 
stay the same, so by lemma \ref{small}, $b_j$ cannot decrease. So if some $B_{ij}$ changes, 
there will be an $i$ such that $B_{ij}$ changes from $0$ to $1$, which we assume from now on. 
We additionally assume that $B_{ij} = 0$ 
implies $e_i\tp T = e_i\tp$, so that $e_i\tp T^{-1} = e_i\tp$ as well. By (iii), 
$B_{ij} = \dim_K e_i\tp \Co_j e_j$  gets replaced by $\dim_K e_i\tp T^{-1} \Co_j e_j = 
\dim_K e_i\tp \Co_j e_j = B_{ij}$. So $B_{ij}$ will stay the same, which is a contradiction. \qedhere

\end{enumerate}
\end{proof}

\begin{proof}[Proof of theorem \ref{main3}.]
If $\dim_K \Co_j e_j = d_j$ for some $j$, then by (i) of lemma \ref{dimL} with $L = K(x_j)$ and 
$v = e_k + x_j e_j$, the condition of lemma \ref{zeroL} is satisfied for every $k$. 
Hence we will additionally 
arrange that $\dim_K \Co_j e_j = d_j$ for all $j$ by way of conjugation. As soon as we have
$\dim_K \Co_j e_j = d_j$ for some $j \ge 2$, it follows from (ii) of lemma \ref{zeroL} that 
$B_{ij} = 0$ implies $B_{i(j-1)} = 0$ for all $i$, so we do not need to show that any more.
\begin{enumerate}[(i)]
 
\item
{\em (Pass 1)}
We start with obtaining $\dim_K \Co_j e_j = d_j$ for all $j$. 
Suppose inductively that $\dim_{K} \Co_j e_j = d_j$ for all $j > k$ already. 
Since $d_k \le d_n = r+1 \le \#K$, it follows from corollary \ref{vcol} that there exists a 
$v \in K^{\times n}$ with $v_{k+1} = v_{k+2} = \cdots = v_n = 0$, such that $d_k = \dim_K \Co_k v$. 
Take $T \in \GL_n(K)$ such that $T e_k = v$ and $T e_j = e_j$ for all $j > k$. Then $T$ is
as in lemma \ref{Tlem}.

Now replace $\Co_n$ by $T^{-1} \Co_n T$. By (i) of lemma \ref{Tlem}, $d_k$ will not change, 
and $\Co_k e_k$ will become $T^{-1} \Co_k T e_k = T^{-1} \Co_k v$. Since $T^{-1} \Co_k v$ 
is isomorphic to $\Co_k v$, $\dim_K \Co_k e_k$ will become $\dim_K \Co_k v = d_k$. 
By (iii) of lemma \ref{Tlem}, $\dim_K \Co_j e_j = d_j$ will not be affected for any $j > k$. 

So we can obtain $\dim_K \Co_j e_j = d_j$ for all $j$ inductively.
The other claims of (i) follow as soon as we have $b_j = \dim_K \Co_j e_j = d_j$ for all $j$. 
We will arrange that by way of another induction pass.

{\em (Pass 2)}
Suppose inductively that $b_j = 
\dim_K \Co_j e_j = d_j$ for all $j > k$ already. We will obtain $b_k = \dim_K \Co_k e_k$ by way of a 
conjugation with a lower triangular matrix $T$. Just as above, the validity of $\dim_K \Co_j e_j = d_j$ 
for every $j > k$ will not be affected. But the validity of $\dim_K \Co_j e_j = d_j$ will not be affected 
for any other $j$ either, because $T$ is lower triangular at the last $n$ columns, see the proof of
(iii) of lemma \ref{Tlem}. 

Take a basis of $\Co_k e_k$ such that the positions of the first nonzero coordinates of the basis 
vectors are all different. Next, take $T \in \GL_n$ lower triangular, such that every column of $T$ 
is either one of those basis vectors of $\Co_k e_k$ (with its first nonzero coordinate on the diagonal of $T$)
or a standard basis unit vector (with its only nonzero coordinate on the diagonal of $T$), 
in such a way that all those basis vectors of $\Co_k e_k$ are included. 

Then $T^{-1}$ maps those basis vectors of $\Co_k e_k$ to standard basis unit vectors
(with corresponding positions of the first nonzero coordinate), 
so that $T^{-1} \Co_k e_k$ is spanned by standard basis unit vectors. By lemma \ref{small},
$b_k$ will become equal to $\dim_K \Co_k e_k$ when $\Co_k e_k$ gets replaced by $T^{-1} \Co_k e_k$.
Now replace $\Co_n$ by $T^{-1} \Co_n T$. By (ii) of lemma \ref{Tlem}, $\Co_k e_k$ will indeed be replaced
by $T^{-1} \Co_k e_k$, so that $b_k$ will become $\dim_K \Co_k e_k$. Furthermore, $\dim_K \Co_k e_k$ and $d_k$
will not change, so we indeed get $b_k = \dim_K \Co_k e_k = d_k$.

We prove that $b_j = \dim_K \Co_j e_j$ will not be affected by this conjugation 
for any $j > k$, by showing that $B_{ij}$ will not change for any $i$ and any $j > k$. 
By (v) of lemma \ref{Tlem}, it suffices to show that $B_{ij} = 0$ implies $e_i\tp T = e_i\tp$. 
So assume that $B_{ij} = 0$. Since the $i$-th row of $B$ is increasing, we have $B_{ik} = 0$ as well. 
Hence the $i$-th coordinate of any vector of $\Co_k e_k$ is zero. By construction of $T$, we have 
$e_i\tp T = e_i\tp$ indeed. So we can decrease $k$ and proceed.

\item
{\em (1 pass)} We start with the first step of the first induction pass in (i), to obtain
$\dim_K \Co_n e_n = d_n$.
As opposed to the double pass construction in (i), we will use a single pass construction here
to fulfill the claims of (i) and (ii) and the additional claim that $\dim_K \Co_j e_j = d_j$ for all $j$,
provided $\#K > \min\{d_{n-1},n-1\}$ after the first step of the first induction pass of (i) to obtain 
$\dim_K \Co_n e_n = d_n$. 

If $\#K \le \min\{d_{n-1},n-1\}$ after the first step of the first induction pass in (i), 
then we proceed with the double pass construction of (i), to obtain $b_{n-1} = \dim_K \Co_{n-1} 
e_{n-1} = d_{n-1}$. Since $d_{n-1}$ does not change any more after the first step of the first 
induction pass of (i), we get $\#K \le \min\{b_{n-1},n-1\}$, which implies (ii).

So assume that $\dim_K \Co_n e_n = d_n$ and $\#K > \min\{d_{n-1},n-1\}$. As long as $d_k = n$, 
we can proceed as in the first induction pass of (i) to obtain $b_j = \dim_K \Co_j e_j = d_j$
for all $j \ge k$, because by lemma \ref{small}, we have $b_k = n$ automatically if $\dim_K \Co_k e_k = n$. 
So suppose that $d_k \le n-1$ and that $b_j = \dim_{K} \Co_j e_j = d_j$ for all $j > k$.

{\em (Step 1)} 
We will first obtain $\dim_{K} \Co_k e_k = d_k$. If $k = n$, then we have already obtained
$\dim_{K} \Co_k e_k = d_k$.
So assume that $k \le n-1$. Then $d_k \le \min\{d_{n-1},n-1\} < \#K$. It follows from corollary \ref{vcol} that 
there exists a $v \in K^{\times n}$ with $v_{k+1} = v_{k+2} = \cdots = v_n = 0$, such that $d_k = \dim_K \Co_k v$ and
additionally $v_k = 1$. Make $T \in \GL_n$ by replacing the $k$-th column of $I_n$ by $v$.

Just as in the first pass of (i), we will obtain $\dim_K \Co_k e_k = d_k$ when we replace $\Co_n$ by $T^{-1} 
\Co_n T$. Furthermore, $d_k$ will not change, and neither will $d_j$ and $\dim_K \Co_j e_j$ for any $j > k$.
But as opposed to (i) and the case $k = n$, we have to show that $b_j = \dim_K \Co_j e_j$ will be preserved for 
all $j > k$, and that the rightmost $n-k$ columns of $B$ will stay decreasing above the diagonal. 
We do that by showing that the rightmost $n-k$ columns of $B$ will be preserved. 
For that purpose, take any column index $j > k$.

Since $T$ is just the identity matrix outside column $k$, it follows from (iv) of lemma \ref{Tlem} that
$B e_j$ will stay the same in case $B_{kj} = 0$. Hence assume that $B_{kj} = 1$. Then the induction assumption 
tells us that even $B_{1j} = B_{2j} = \cdots = B_{kj} = 1$. Since the last $n-k$ rows of $T$ are the same 
as those of $I_n$, it follows from (v) of lemma \ref{Tlem} that $B e_j$ will stay the same again. 
So let us proceed with replacing $\Co_n$ by $T^{-1} \Co_n T$. 

{\em (Step 2)} 
The next thing to arrange is that $b_k = \dim_K \Co_k e_k$, which can be done in the same manner 
as in the second induction pass of (i).

{\em (Step 3)}
At last, we must make the $k$-th column of $B$ decreasing above the diagonal. For that purpose, take 
$s < k$ maximal, such that $B_{sk} = 1$. Then there exists a permutation matrix $P$, which matches 
the identity matrix outside the leading principal minor matrix of size $s$, such that $P B e_k$ is 
decreasing above the $k$-th coordinate. Take $T = P^{-1}$. Then $P e_j = e_j = P^{-1} e_j = T e_j$
for all $j \ge k$, so $T$ satisfies both the condition of lemma \ref{Tlem} and the additional 
condition of (ii) of lemma \ref{Tlem}.

Now replace $\Co_n$ by $P \Co_n P^{-1} = T^{-1} \Co_n T$. By (i), (ii) and (iii) of lemma \ref{Tlem}, 
$\dim_k \Co_j e_j$ and $d_j$ will not change for any $j \ge k$. By (ii) of lemma \ref{Tlem},
$\Co_k e_k$ will be replaced by $T^{-1} \Co_k e_k = P \Co_k e_k$, and $B e_k$ will be replaced by 
$P B e_k$ along with it. So $B e_k$ will become decreasing above the $k$-th coordinate and $b_k$ 
stays the same. 

In order to prove that $B e_j$ will stay decreasing above the $j$-th coordinate 
and that $b_j$ will be maintained, for all $j > k$, we show that $B_{ij}$ stays the same for all
$i$ and all $j > k$.
By (v) of lemma \ref{Tlem}, it suffices to show that $B_{ij} = 0$ implies $e_i\tp T = e_i\tp$. 
If $i > s$, then $e_i\tp P =e_i\tp = e_i\tp P^{-1} = e_i\tp T$, so we may assume that $i \le s$.
By (ii) of lemma \ref{zeroL}, which is valid as long as $j > k$, we have 
$1 = B_{sk} = B_{s(k-1)} = \cdots = B_{sj}$.
Since $i \le s < k < j$ and $B e_j$ is decreasing above the $j$-th coordinate, $B_{ij} = 1$ 
is satisfied as well as $B_{sj} = 1$. Hence $B_{ij} = 0$ implies $e_i\tp T = e_i\tp$ once again.
So we can decrease $k$ and proceed.

\item Assume that $I_n \in \Co_n$. Since $b_{n-1} = \dim_K \Co_{n-1} e_{n-1}$ on 
account of (i), we deduce from (iii) of lemma \ref{zeroL} that $B_{(n-1)n} \ge B_{n(n-1)}$. 
So $b_n > \min\{b_{n-1},n-1\}$ remains to be proved. Hence assume that $b_n \le n-1$. Then
there exists an $i$ such that $B_{in} = 0$. By (ii) of lemma \ref{zeroL}, we have 
$e_i\tp \Co_{n-1} = \{0\}$. So 
$$
e_i\tp \cdot \Co_{n-1} \cdot (e_i  + x_{n-1}e_{n-1}) = 0 \ne 1 = e_i\tp \cdot I_n \cdot (e_i  + x_{n-1}e_{n-1})
$$
Consequently, we deduce from (i) of lemma \ref{zeroL} that
\begin{align*}
b_{n-1} &= \dim_K \Co_{n-1} e_{n-1} \\
        &\le \dim_{K(x_{n-1})} \big(K(x_{n-1}) \otimes_K \Co_{n-1}\big) \cdot (e_i  + x_{n-1}e_{n-1}) \\
        &< \dim_{K(x_{n-1})} \big(K(x_{n-1}) \otimes_K (\Co_{n-1} + KI_n)\big) \cdot (e_i  + x_{n-1}e_{n-1}) \\
        &\le \dim_{K(x_{n-1})} \big(K(x_{n-1}) \otimes_K \Co_n\big) \cdot (e_i  + x_{n-1}e_{n-1})
\end{align*}
From (i) of lemma \ref{dimL},
it follows that the right hand side does not exceed $\dim_{K(x)} \big((K(x) \otimes_K \Co_n) \cdot x\big)$.
So $b_{n-1} < d_n$. Since we arranged $b_n = \dim_K \Co_n e_n = d_n$, we have $b_n > b_{n-1} \ge 
\min\{b_{n-1},n-1\}$. \qedhere
\end{enumerate}
 
\end{proof}

\noindent
The double pass construction in (i) of the proof of theorem \ref{main3} is needed because the first 
induction pass may affect $b_j = \dim_{K} \Co_j e_j$. In the first step in (ii) of the proof of
theorem \ref{main3}, we additionally have $v_k = 1$, so that we can choose the transformation matrix
$T$ more conveniently than in the first induction pass in (i) of the proof of theorem \ref{main3}. 
Consequently, $b_j = \dim_{K} \Co_j e_j$ will not be affected in the first step in (ii) of the proof 
of theorem \ref{main3}, so that a double pass construction is not necessary there.

If $n = 3$ and $\Co$ is the space over $\F_2$ wich is spanned by
$$
\left( \begin{array}{ccc}
\bar{0} & \bar{1} & \bar{0} \\
\bar{0} & \bar{1} & \bar{0} \\
\bar{0} & \bar{0} & \bar{0}
\end{array} \right) \qquad \mbox{and} \qquad
\left( \begin{array}{ccc}
\bar{0} & \bar{0} & \bar{0} \\
\bar{0} & \bar{1} & \bar{1} \\
\bar{0} & \bar{0} & \bar{0}
\end{array} \right)
$$
then a computer calculation reveals that $\Co$ does not satisfy the claim of theorem \ref{main2}. 
This is because the condition of lemma \ref{zeroL} cannot be met. We use lemma \ref{vanish} 
to obtain this condition, but that requires a subset of cardinality three of $\F_2$.

\section{The radical of a Mathieu subspace of {\mathversion{bold}$\Mat_n(K)$}}

The results about the ideal $I$ in the preamble of the theorem below are well-known.
We have added the proof of these results for completeness only.

\begin{theorem} \label{rad}
Assume $\Ma$ is a $K$-subspace of $\Mat_n(K)$. As $\{0\} \subseteq \Ma$, 
we can take a left ideal $I$ of $\Mat_n(K)$ wich is contained in $\Ma$ and maximal
as such. Then $I$ is unique, has dimension 
$n k$ for some $k \le n$ and there exist a $T \in \GL_n(K)$ such that
$$
I T = T^{-1} I T = \{ M \in \Mat_n(K) \mid M e_{k+1} = M e_{k+2} = \cdots = M e_n = 0\}
$$
Furthermore, $I$ is a principal left ideal which is generated by an idempotent, and
the following statements are equivalent.
\begin{enumerate}[{\upshape (1)}]

\item $\Ma$ is a left Mathieu subspace of $\Mat_n(K)$,

\item $I$ contains all idempotents of $\Ma$,

\item $\rd(\Ma) = \rd(I)$.

\end{enumerate}
\end{theorem}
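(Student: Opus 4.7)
The approach is to combine three ingredients: the classification of left ideals of $\Mat_n(K)$, a Fitting-type decomposition of a single matrix $A$, and the definition of a left Mathieu subspace. I would proceed in three stages.

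First I recall that every left ideal of $\Mat_n(K)$ has the form $I_V := \{M \in \Mat_n(K) \mid V \subseteq \ker M\}$ for a unique subspace $V \subseteq K^n$, and conversely each $I_V$ is a left ideal; this is standard semisimple module theory, using that $K^n$ is the unique simple left $\Mat_n(K)$-module. The formula $\dim I_V = n(n - \dim V)$ immediately gives $\dim I = nk$ with $k := n - \dim V$. Choosing any complement $W$ of $V$ and letting $E$ be the projection of $K^n$ onto $W$ along $V$, one has $E \in I_V$ and $M = ME$ for every $M \in I_V$ (since $V \subseteq \ker M$), so $I_V = \Mat_n(K) \cdot E$, making $I$ principal and generated by an idempotent. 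Picking $T \in \GL_n(K)$ that sends $\mathrm{span}(e_{k+1}, \ldots, e_n)$ onto $V$ conjugates $I_V$ into the standard form stated in the theorem, and the equality $IT = T^{-1}IT$ is automatic for any invertible $T$, because $TI \subseteq I$ together with $T^{-1}I \subseteq I$ forces $TI = I$. Uniqueness of $I$ is automatic as well: the sum of two left ideals contained in the subspace $\Ma$ is again a left ideal contained in $\Ma$, so two maximal ones must coincide.

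Next, the easy directions of the equivalences. For $3)\Rightarrow 1)$, if $A^m \in \Ma$ for all $m \ge 1$ then $A \in \rd(\Ma) = \rd(I)$, so $A^m \in I$ for $m \gg 0$ and $BA^m \in I \subseteq \Ma$. For $1)\Rightarrow 2)$, if $E \in \Ma$ is idempotent then $E^m = E \in \Ma$ for all $m \ge 1$, and the left Mathieu property gives $BE = BE^m \in \Ma$ for $m \gg 0$ and every $B$; hence $\Mat_n(K) \cdot E \subseteq \Ma$ is a left ideal inside $\Ma$ and therefore sits inside the maximal one $I$, so $E \in I$.

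The main obstacle is $2)\Rightarrow 3)$. Since $I \subseteq \Ma$ gives $\rd(I) \subseteq \rd(\Ma)$ trivially, take $A \in \rd(\Ma)$ and choose $N$ large enough that $A^m \in \Ma$ for $m \ge N$ and the ranks of $A^m$ have stabilized, so that the Fitting decomposition $K^n = V' \oplus W'$ with $V' = \ker A^N$ and $W' = \mathrm{im}\, A^N$ holds, with $A$ nilpotent on $V'$ and invertible on $W'$. Let $E'$ be the projection onto $W'$ along $V'$. Writing the minimal polynomial of $A$ as $t^a q(t)$ with $q(0) \ne 0$ and $a \le N$, a Bezout identity $u(t) t^a + v(t) q(t) = 1$ exhibits $E' = u(A) A^a \in A^a K[A]$; since $t^{N-a}$ is invertible modulo $q(t)$, the ideals $(t^a)$ and $(t^N)$ coincide in $K[t]/(t^a q(t))$, so in fact $E' = s(A) A^N \in A^N K[A]$ for some polynomial $s$. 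But $A^N K[A]$ is spanned over $K$ by $A^N, A^{N+1}, \ldots$, each of which lies in $\Ma$, so $E' \in \Ma$, and by 2) we conclude $E' \in I$. Finally $A^m E' = A^m$ for $m \ge N$, because $A^m$ kills $V'$ and $E'$ is the identity on $W' \supseteq \mathrm{im}\, A^m$; hence $A^m \in \Mat_n(K) \cdot E' \subseteq I$ and $A \in \rd(I)$. The delicate step of this last paragraph is exhibiting the Fitting idempotent $E'$ as a $K$-linear combination of $A^N, A^{N+1}, \ldots$, which is where the Bezout/CRT computation in $K[A]$ does the real work.
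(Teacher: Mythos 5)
Your argument is correct, and for the three equivalences it takes a genuinely different route from the paper. The paper closes the cycle $1)\Rightarrow 3)\Rightarrow 2)\Rightarrow 1)$ and outsources the two nontrivial implications to Zhao: $1)\Rightarrow 3)$ to \cite[Lm.\@ 4.9]{MR2859886}/\cite[Th.\@ 4.10]{MR2859886} and $2)\Rightarrow 1)$ to \cite[Th.\@ 4.2]{MR2859886}, leaving only the easy $3)\Rightarrow 2)$ as a direct argument. You instead close the reverse cycle $3)\Rightarrow 1)\Rightarrow 2)\Rightarrow 3)$, and all three steps are self-contained: $3)\Rightarrow 1)$ and $1)\Rightarrow 2)$ are short and direct, and the real content sits in $2)\Rightarrow 3)$, where the Fitting decomposition together with the $K[t]$-computation (showing the Fitting idempotent $E'$ lies in $A^N K[A] = \operatorname{span}_K\{A^N, A^{N+1},\ldots\}$ once $N$ exceeds the nilpotency index of $A$ on $\ker A^N$) produces an idempotent $E'\in\Ma$ with $A^m E' = A^m$ for $m\ge N$. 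This is essentially a re-derivation of the tool that Zhao's cited results encapsulate, so what you buy is a proof that does not lean on external results, at the cost of redoing the Fitting/Bezout argument. The structural part (uniqueness of $I$ via sums of left ideals, $I = I_V$ with $\dim I = nk$, $I$ principal and generated by the projection idempotent onto a complement of $V$, conjugation to the standard form, and $IT = T^{-1}IT$ since left ideals are $\GL_n$-stable on the left) matches the paper's argument in substance, only phrased via the submodule $V\subseteq K^n$ rather than via a matrix of maximal rank in $I$.
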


\begin{proof}
Since $\Ma$ is a $K$-subspace of $\Mat_n(K)$, the sum of two left ideals contained
in $\Ma$ is again contained in $\Ma$. Since $\Ma$ is a finite $K$-subspace of $\Mat_n(K)$,
we can deduce that $I$ is unique.

Take $M \in I$ of maximum rank $k$, and $T \in \GL_n(K)$ such that 
the last $n - k$ columns of $M T$ are zero. Since the first $k$ columns of 
$M T$ are independent of the last $n - k$ columns, the subspace of
$A \in \Mat_n(K)$ such that $A e_{k+1} = A e_{k+2} = \cdots = A e_n = 0$ is generated by $M T$
and therefore contained in $I T$. If $I T$ contains another matrix, then we get a contradiction with 
the maximality of $k$, because $I T$ is a left ideal of $M_n(K)$. 

Furthermore, $I T = T^{-1} I T$ is a principal left ideal which is generated by
$$
\left( \begin{array}{cc} I_k & \emptyset \\ \emptyset & \emptyset \end{array} \right)
$$
Hence $I$ is a principal left ideal which is generated by an idempotent as well. 
So it remains to show the following.
\begin{description}

\item[{\mathversion{bold}$(2) \Rightarrow (1)$}]
This follows from \cite[Th.\@ 4.2]{MR2859886}.  

\item[{\mathversion{bold}$(3) \Rightarrow (2)$}]
Suppose that $\rd(\Ma) = \rd(I)$. Since each idempotent of $\Ma$ is contained in $\rd(\Ma)$
and every idempotent in $\rd(I)$ is already in $I$, $I$ contains all idempotents of $\Ma$. 

\item[{\mathversion{bold}$(1) \Rightarrow (3)$}]
This follows from \cite[Lm.\@ 4.9]{MR2859886} or \cite[Th.\@ 4.10]{MR2859886}. \qedhere

\end{description}
\end{proof}

\begin{corollary} 
Suppose that $\Ma$ is a left Mathieu subspace of $\Mat_n(K)$, such that $0 < n^2 - \dim_K \Ma < n$.
Then $\Ma$ is even a two-sided Mathieu subspace of $\Mat_n(K)$ and $\#K > 2$. 
\end{corollary}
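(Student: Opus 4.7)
The plan is to combine Theorem \ref{rad} with Theorem \ref{main1}, using Zhao's codimension-one classification as a boundary case. Let $c := n^2 - \dim_K \Ma$, so $0 < c < n$ by hypothesis. Since $\Ma$ is left Mathieu, Theorem \ref{rad} provides a unique maximal left ideal $I \subseteq \Ma$ of dimension $nk$ for some $k \leq n-1$ (the bound $k < n$ follows from properness of $\Ma$), with $\rd(\Ma) = \rd(I)$, and $I$ contains every idempotent of $\Ma$. After a suitable linear conjugation we may assume $I$ consists of the matrices with the last $n-k$ columns zero; then the constraint space $\Co := \Ma^\perp$ is contained in the space $I^\perp$ of matrices whose first $k$ rows vanish. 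In particular, when $k \geq 1$ the rank-one idempotent $e_1 e_1\tp$ lies in $I \subseteq \Ma$ and has trace $1$, while $I_n \notin I^\perp \supseteq \Co$.

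My strategy is to force $k = 0$: once $I = \{0\}$, one has $\rd(\Ma) = \rd(\{0\})$, which is the set of nilpotent matrices, so any $a \in \rd(\Ma)$ satisfies $a^m = 0$ for $m \gg 0$, making $b a^m c = 0 \in \Ma$ for all $b, c \in \Mat_n(K)$ and proving that $\Ma$ is two-sided. To obtain $k = 0$, I would apply Theorem \ref{main1} to $\Ma$ (a Mathieu subspace of some type, namely left): its conclusion $\tr M = 0$ for all $M \in \Ma$, combined with $e_1 e_1\tp \in \Ma$ having trace $1$, rules out $k \geq 1$. Theorem \ref{main1} requires $c < \min\{n, \#K\}$; the inequality $c < n$ is given, so the crux is to secure $c < \#K$.

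The $\#K > 2$ assertion reduces to the same bound. When $c = 1$, Zhao's classification forces $\Ma$ to be the trace-zero subspace $H$, which is Mathieu only if $\chr K > n$; since $c = 1 < n$ entails $n \geq 2$, we obtain $\#K \geq \chr K \geq 3$. When $c \geq 2$, the inequality $c < \#K$ automatically yields $\#K \geq 3$. The main obstacle is thus establishing $c < \#K$ in the case $c \geq 2$. Here the inclusion $\Co \subseteq I^\perp$ can be used to sharpen the dimension $r + 1 := \dim_{K(x)}\bigl((K(x) \otimes_K \Co_n) \cdot x\bigr)$ appearing in Theorem \ref{main2}, where $\Co_n := \Co \oplus K I_n$: for $C \in \Co$, the vector $Cx$ has zero first $k$ coordinates, so the $K(x)$-span of $\{Cx : C \in \Co_n\}$ in $K(x)^n$ has dimension at most $n - k + 1$, with the extra direction supplied by $I_n \cdot x = x$; hence $r + 1 \leq n - k + 1$. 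When $k \geq 1$ we have $I_n \notin \Co$, so provided $\#K \geq n - k + 1$, Theorem \ref{main2} combined with Corollary \ref{rectangcor} forces $\Ma = \Mat_n(K)$, contradicting properness. Handling the residual case $\#K < n - k + 1$ via a careful interplay of $k$, $c$, and $\#K$ is the most delicate step of the argument.
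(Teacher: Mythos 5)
Your proposal takes a genuinely different route from the paper's, and unfortunately it leaves a real gap that you yourself flag at the end.

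Your plan rests on invoking Theorem~\ref{main1} to conclude $\tr M = 0$ for all $M \in \Ma$, and hence $I = (0)$. But Theorem~\ref{main1} requires the codimension $c$ to satisfy $c < \#K$, and this bound is nowhere among the corollary's hypotheses --- indeed $\#K > 2$ is part of the \emph{conclusion}, so you cannot help yourself to any largeness of $K$. Your attempt to secure $c < \#K$ via the estimate $r+1 \le n-k+1$ together with Theorem~\ref{main2} and Corollary~\ref{rectangcor} only runs when $\#K \ge n-k+1$, and you explicitly concede that the complementary case $\#K < n-k+1$ is unresolved. That residual case is not a technicality: precisely when $K$ is small (e.g.\@ $K = \F_2$ with $c \ge 2$), the heavy machinery of Theorems~\ref{main1} and \ref{main2} simply does not apply, and a completely different argument is required. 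There is also a circularity in the $c \ge 2$ branch of your $\#K > 2$ claim: you deduce $\#K \ge 3$ from $c < \#K$, but $c < \#K$ is exactly what you failed to establish there.

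The paper avoids all of this by not touching Theorem~\ref{main1} at all. It works directly with Theorem~\ref{rad}: if $I \ne (0)$ (so $I$ has dimension $nk$ with $1 \le k \le n-1$, normalised to be the matrices whose last $n-k$ columns vanish), then one intersects $\Ma$ with the $\bigl(k(n-k)+1\bigr)$-dimensional space of matrices of the shape
$$
\left( \begin{array}{cc} \emptyset & \tilde{M} \\ \emptyset & \lambda I_{n-k} \end{array} \right),
$$
and since $k(n-k)+1 \ge n > c$, this intersection $\Ve$ is nontrivial. Any nonzero $M \in \Ve$ can be adjusted (by rescaling if $\lambda \ne 0$, or by adding an element of $I$ if $\lambda = 0$) to an idempotent $E \in \Ma$ with $E \notin I$, contradicting the implication $1) \Rightarrow 2)$ of Theorem~\ref{rad}. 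Hence $I = (0)$; two-sidedness then follows because $I$ contains all idempotents of $\Ma$, so $\Ma$ has none nontrivial. For $\#K > 2$, the paper uses the diagonal matrices: since their $n$-dimensional subspace cannot lie in $\Co$-many constraints, $\Ma$ contains a nonzero diagonal matrix $E$; if $\#K = 2$ then $E$ is idempotent, contradicting $I = (0)$. Both arguments are pure linear algebra and need no lower bound on $\#K$, which is exactly what the statement demands. Your idea of routing everything through Theorem~\ref{main1} is natural but, as you observed, cannot be made to close over small fields; the paper's elementary idempotent-in-$\Ve$ construction is what does the work.
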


\begin{proof}
Take $I$ as in theorem \ref{rad}. We first prove that $\Ma$ is even two-sided.
On account of \cite[Th.\@ 4.2]{MR2859886}, it suffices to show that 
$\Ma$ has no nontrivial idempotent, which by $(1) \Rightarrow (2)$ of theorem \ref{rad} 
comes down to that $I$ has no nontrivial idempotents. Since theorem \ref{rad} additionally
tells us that $I$ is generated by a single idempotent, we just have to show that $I = (0)$. 

So assume that $I \ne (0)$. On account of theorem \ref{rad}, $I$ has dimension $n k$, where
$1 \le k \le n-1$ because $0 < n^2 - \dim_K \Ma$. Furthermore, we may assume that 
$I = \{ M \in \Mat_n(K) \mid M e_{k+1} = M e_{k+2} = \cdots = M e_n = 0\}$.

The space $\Ve$ defined by
$$
\left\{ M \in \Ma \,\left|\, M = \left( \begin{array}{cc} \emptyset & \tilde{M} \\ 
\emptyset & \lambda I_{n-k} \end{array} \right) \mbox{ for some } \tilde{M} \in \Mat_{k,n-k}(K) 
\mbox{ and a } \lambda \in K\right.\right\}
$$
is the intersection of $\Ma$ with a space of dimension $k(n-k) + 1$. Since the codimension of
$\Ma$ is less than $n \le k(n-k) + 1$, we have $\dim_K \Ve \ge 1$, so $\Ve$ has a nonzero element
$M$. If $\lambda \ne 0$ for $M$, then we take $E = \lambda^{-1} M$. If $\lambda = 0$ for $M$,
then we make $E$ from $M$ by replacing the leading principal minor matrix of size $k$ by $I_k$,
so that $E - M \in I$. In both cases, $E$ is an idempotent of $\Ma$ which is not contained in $I$.
This contradicts $(1) \Rightarrow (2)$ of theorem \ref{rad}, so $I = (0)$ indeed.

Next, we show that $\#K > 2$. Since the subspace of diagonal matrices of $\Mat_n(K)$ has dimension $n$ 
and $\Ma$ has codimension less than $n$, $\Ma$ contains at least two diagonal matrices, of which
one, say $E$, is nonzero. If $\#K = 2$, then $E$ is an idempotent and we have $E \in I$ because
$\Ma$ is a left Mathieu subspace of $\Mat_n(K)$. This contradicts $I = (0)$, so $\#K > 2$.
\end{proof}

\bibliographystyle{codimltn}
\bibliography{codimltn}

\end{document}